\newtheorem{theorem}{Theorem}[section]\newtheorem{thm}[theorem]{Theorem}
\newtheorem*{theorem*}{Theorem}
\newtheorem{lemma}{Lemma}[section]
\newtheorem{definition}[theorem]{Definition}
\newtheorem{remark}[theorem]{Remark}
\newcommand{\RR}{\mathbb{R}}
\def\Ric{\text{Ric}}
\def\a{\alpha}
\def\ol{\overline}
\def\e{\epsilon}
\def\p{\partial}
\def\R{\Bbb R}
\def\vp{\varphi}
\def\Ric{\operatorname{Ric}}
\def\tr{\operatorname{tr}}
\numberwithin{equation}{section}
\begin{document}
\title[]{\bf Moduli of continuity for viscosity solutions on manifolds}

\author{Xiaolong Li}
\address{Department of Mathematics, University of California, San Diego, La Jolla, CA 92093, USA}
\email{xil117@ucsd.edu}
%
%
\author{Kui Wang}
%
%
\address{School of Mathematic Sciences, Soochow University, Suzhou, 215006, China}
\email{kuiwang@suda.edu.cn}
%

\maketitle

\begin{abstract}
We establish the estimates of modulus of continuity for viscosity solutions of nonlinear evolution equations on manifolds, extending previous work of B. Andrews and J. Clutterbuck for regular solutions on manifolds \cite{AC3} and the first author's recent work for viscosity solutions in Euclidean spaces \cite{me1}.
\end{abstract}

\section{Introduction}

In this paper, we study the moduli of continuity for viscosity solutions to nonlinear evolution equations on manifolds.
Let $(M, g)$ be a compact Riemannian manifold. Recall that given a continuous function $u:M \to \R$, the \emph{optimal modulus of continuity} $w$ of $u$ can be defined by
$$w(s)=\sup\left\{\frac{u(y)-u(x)}{2}: \text{\quad} d(x,y)=2s\right\},$$
where $d$ is the induced distance function on $(M, g)$.

We will mainly consider the following isotropic flow:
\begin{equation}\label{1.1}
u_t = \left[\alpha(|Du|,t) \frac{D_iu D_ju}{|Du|^2}+\beta(|Du|,t)\left(\delta_{ij}-\frac{D_iu D_ju}{|Du|^2}\right)\right]D_iD_ju +b(|Du|, t).
\end{equation}
%
We make the assumptions that equation \eqref{1.1} is nonsingular, i.e., the right hand side of \eqref{1.1}
is a continuous function on $\R_{+} \times \R^n \times S^{n \times n}$,
where $S^{n \times n}$ is the set of $n \times n$ symmetric matrices, and that $\alpha, \beta$ are nonnegative functions.

For domains in Euclidean spaces, B. Andrews and J. Clutterbuck \cite{AC2} proved that the modulus of continuity for a regular solution of \eqref{1.1} is a viscosity subsolution of the one-dimensional equation $\phi_t = \alpha(\phi',t) \phi''$. Recently this was shown by the first author \cite{me1} to be true for viscosity solutions as well.
On manifolds the estimates of modulus of continuity for regular solutions have been investigated by B. Andrews and J. Clutterbuck \cite{AC3}, B. Andrews and L. Ni\cite{AN} and L. Ni\cite{N1}. More precisely, if the Ricci curvature of the manifold has a lower bound: $\Ric_g \geq (n-1) \kappa g$, then the modulus of continuity of a regular solution of \eqref{1.1} satisfies
\begin{equation}\label{1d mfd}
w_t\le \alpha(w', t)w''+(n-1)\frac{c_{\kappa}'(s)}{c_{\kappa}(s)} \beta(w', t)w' 
\end{equation}
in the viscosity sense, where $c_{\kappa}(s)$ is defined by $c_{\kappa}'' + \kappa c_{\kappa} =0, c_{\kappa}(0)=1, c_{\kappa}'(0)=0$. The main goal of this paper is to show that various modulus of continuity estimates remain valid for viscosity solutions on manifolds as well. We would like to mention some important examples of such equations:
\begin{itemize}
\item[(i)]If we take $\a= 1, \beta =  1$ and $b=0$, then equation \eqref{1.1} reduces to the heat equation:
$$u_t =\Delta u;$$
\item[(ii)]If we take $\alpha = \frac{1}{1+|Du|^2}, \beta =1 $ and $b=0$, then equation \eqref{1.1} reduces to the graphical mean curvature flow:
$$u_t=\left(\delta_{ij}-\frac{D_iu D_ju}{1+|Du|^2}\right)D_iD_ju;$$
\item[(iii)]If we take $\a = (p-1)|Du|^{p-2}, \beta=|Du|^{p-2}$ and $b=0$, then equation \eqref{1.1} reduces to the $p$-Laplacian equation with $p> 2$:
$$\mbox{div} \left(|Du|^{p-2}Du\right)=0.$$
\end{itemize}

The paper is organized as follows: In Section 2, we recall the definitions of viscosity solutions on manifolds and state the parabolic maximum principle for semicontinuous functions on manifolds, which is the main technical tool we use in this paper. The main proof is given in Section 3. In Section 4, we prove height dependent gradient bounds, which is useful to derive gradient estimates for nonlinear equations.  A generalization of Section 3 to Bakry-Emery manifolds is done in Section 5. In Section 6, we treat Neuman and Dirichlet boundary value problems and establish the estimates of modulus of continuity.

\section{Preliminaries}

\subsection{Definition of Viscosity Solutions on manifolds}
\mbox{ }\\
Let $M$ be a Riemannian manifold. The following notations are useful:
$$\mbox{USC}(M\times (0,T))=\{u:M \to \R |\  u \mbox{  is upper semicontinuous  }\},$$
$$\mbox{LSC}(M\times (0,T))=\{u:M \to \R |\  u \mbox{  is lower semicontinuous  }\}.$$

We first introduce the notion of parabolic semijets on manifolds. We write $z=(x,t)$ and $z_0=(x_0, t_0)$.
\begin{definition}
For a function $u\in \mbox{USC}(M\times (0,T))$,
we define the parabolic second order superjet of $u$ at a point $z_0\in M\times (0,T)$ by
\begin{align*}
\mathcal{P}^{2,+} u (z_0) &:=\{(\vp_t(z_0), D\vp(z_0), D^2\vp(z_0)) :
 \vp \in C^{2,1}(M\times (0,T)),  \\
  & \mbox{  such that  } u- \vp \mbox{  attains a local maximum at } z_0\}.
\end{align*}
For $u\in \mbox{LSC}(M\times (0,T))$, the parabolic second order subjet of $u$ at $z_0\in M\times (0,T)$ is defined by
$$\mathcal{P}^{2,-} u (z_0):=-\mathcal{P}^{2,+} (-u) (z_0).$$
\end{definition}
We also define the closures of $\mathcal{P}^{2,+} u (z_0)$ and $\mathcal{P}^{2,-} u (z_0)$ by
\begin{align*}
\overline{\mathcal{P}}^{2,+}u(z_0)
&=\{(\tau,p,X)\in \R \times T_{x_0}M \times Sym^2(T^*_{x_0}M) |
\mbox{  there is a sequence  } (z_j,\tau_j,p_j,X_j) \\
&\mbox{  such that  } (\tau_j,p_j ,X_j)\in \mathcal{P}^{2,+}u(z_j) \\
&\mbox{  and  } (z_j,u(z_j),\tau_j,p_j,X_j) \to (z_0,u(z_0),\tau, p ,X) \mbox{  as  } j\to \infty \}; \\
\overline{\mathcal{P}}^{2,-}u(z_0)&=-\overline{\mathcal{P}}^{2,+}(-u)(z_0).
\end{align*}

Now we give the definition of a viscosity solution for the general equation
\begin{equation} \label{equ mfd}
u_t+F(x, t, u, Du, D^2 u)=0
\end{equation}
on $M$.
Assume $F\in C(M \times[0,T] \times \R \times T_{x_0}M \times Sym^2(T^*_{x_0}M))$ is proper, i.e.
$$F(x,t,r,p,X) \leq F(x,t,s,p,Y) \mbox{  whenever  } r\leq s, Y \leq X.$$


\begin{definition}
(i) A function $u \in \mbox{USC}(M\times(0,T))$ is a viscosity subsolution of \eqref{equ mfd}
if for all $z \in M\times(0,T)$ and $(\tau, p, X) \in \mathcal{P}^{2,+}u(z)$,
\begin{align*}
\tau +F(z, u(z), p, X) \leq 0.
\end{align*}

(ii) A function $u \in \mbox{LSC}(M\times(0,T))$ is a viscosity supersolution of \eqref{equ mfd}
if for all $z \in M\times(0,T)$ and $(\tau, p, X) \in \mathcal{P}^{2,-}u(z)$,
\begin{align*}
\tau +F(z, u(z) , p, X) \geq 0.
\end{align*}

(iii) A viscosity solution of \eqref{equ mfd} is defined to be a continuous function that is both a
viscosity subsolution and a viscosity supersolution of \eqref{equ mfd}.
\end{definition}

\subsection{Parabolic Maximum Principle for Semicontinuous Functions on Manifolds}
The main technical tool we use is the parabolic version maximum principle for semicontinous functions on manifolds, which is a restatement of \cite[Theorem 8.3]{CIL}, for Riemannian manifolds. One can also find it in \cite[Section 2.2]{I} or \cite[Theorem 3.8]{AFS1}.
\begin{thm}\label{max prin}
Let $M_1^{N_1}, \cdots, M_k^{N_k}$ be Riemannian manifolds, and $\Omega_i \subset M_i$ open subsets.
Let $u_i \in USC((0,T)\times \Omega_i)$, and $\vp$ defined on $(0,T)\times \Omega_1 \times \cdots \times \Omega_k$ such that $\vp$ is continuously differentiable in $t$ and twice continuously differentiable in
$(x_1, \cdots x_k) \in \Omega_1 \times \cdots \times \Omega_k$.
Suppose that $\hat{t} \in (0,T), \hat{x}_i \in \Omega_i$ for $i=1, \cdots, k$ and the function
$$\omega(t, x_1, \cdots, x_k) :=u_1(t,x_1)+\cdots + u_k(t,x_k)-\vp(t,x_1, \cdots , x_k) $$
attains a maximum at $(\hat{t},\hat{x}_1, \cdots, \hat{x}_k)$ on $(0,T)\times \Omega_1 \times \cdots \times \Omega_k$.
Assume further that there is an $r >0$  such that for every $M >0$ there is a $C>0$ such that for $i=1, \cdots, k$
\begin{align*}
& b_i \leq C  \mbox{  whenever  } (b_i,q_i,X_i) \in \ol{\mathcal{P}}^{2,+}u_i(t,x_i) , \\
& d(x_i, \hat{x}_i)+|t-\hat{t}| \leq r \mbox{  and  } |u_i(t,x_i)|+|q_i| +\|X_i\| \leq M.
\end{align*}
Then for each $\lambda>0$, there are $X_i \in Sym^2(T^*_{\hat{x}_i} M_i)$ such that
\begin{align*}
& (b_i,D_{x_i}\vp(\hat{t},\hat{x}_1, \cdots, \hat{x}_k),X_i)  \in \ol{\mathcal{P}}^{2,+}u_i(\hat{t},\hat{x}_i),\\
&  -\left(\frac 1 \lambda +\left\|M\right\| \right)I \leq
    \begin{pmatrix}
   X_1 & \cdots & 0 \\
   \vdots & \ddots & \vdots \\
   0 & \cdots & X_k
   \end{pmatrix}
   \leq M+\lambda M^2,  \\
& b_1 + \cdots + b_k =\vp_t(\hat{t},\hat{x}_1, \cdots, \hat{x}_k),
\end{align*}
where $M=D^2\vp(\hat{t},\hat{x}_1, \cdots, \hat{x}_k).$
\end{thm}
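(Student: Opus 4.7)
The plan is to reduce the manifold statement to the classical Crandall--Ishii parabolic maximum principle in Euclidean space via normal coordinates centered at the maximum points $\hat{x}_i$. The key observation is that in normal coordinates at a single base point, the metric is the identity and the Christoffel symbols vanish, so the Riemannian Hessian of a smooth function coincides with its ordinary coordinate Hessian at that point. This makes the identification between Euclidean and Riemannian semijets at the base point canonical and loses no geometric information.

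First I would choose normal coordinate charts $\psi_i : B_{r_i}(0) \subset T_{\hat{x}_i}M_i \to M_i$ with $\psi_i(0) = \hat{x}_i$, and pull everything back to Euclidean space by setting $\tilde{u}_i(t, v_i) = u_i(t, \psi_i(v_i))$ and $\tilde{\vp}(t, v_1, \ldots, v_k) = \vp(t, \psi_1(v_1), \ldots, \psi_k(v_k))$. The maximality hypothesis transfers, and the bound on $b_i$ within $\ol{\mathcal{P}}^{2,+} u_i$ carries over to the analogous bound for $\tilde{u}_i$ in Euclidean superjets since the charts are time-independent and the semijet identification at the origin is exact. I would then invoke Theorem 8.3 of Crandall--Ishii--Lions directly to obtain Euclidean symmetric matrices $\tilde{X}_i$ with $(b_i, D_{v_i}\tilde{\vp}(\hat{t}, 0), \tilde{X}_i) \in \ol{\mathcal{P}}^{2,+}\tilde{u}_i(\hat{t}, 0)$ and the block matrix inequality controlled by $\tilde{M} = D^2_v \tilde{\vp}(\hat{t}, 0)$.

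The second step is to reinterpret the conclusion on the manifold. The differentials $d\psi_i|_0 : T_0 \R^{N_i} \to T_{\hat{x}_i} M_i$ are linear isometries, so the Euclidean Hessians $\tilde{X}_i$ correspond to elements $X_i \in Sym^2(T^*_{\hat{x}_i} M_i)$, and the Euclidean superjets of $\tilde{u}_i$ at the origin correspond bijectively to the parabolic superjets of $u_i$ at $\hat{x}_i$ on the manifold. Since the Christoffel symbols vanish at the origin in normal coordinates, the Euclidean Hessian $D^2_{v_i} \tilde{\vp}(\hat{t}, 0)$ in the diagonal blocks agrees with the Riemannian Hessian of $\vp$ in the $x_i$ variable, and the off-diagonal blocks of $\tilde{M}$ match the mixed covariant derivatives of $\vp$ on the product manifold $M_1 \times \cdots \times M_k$. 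Consequently the matrix inequality transfers verbatim, the identity $b_1 + \cdots + b_k = \vp_t$ is immediate, and the hypothesis $b_i \leq C$ required by Crandall--Ishii--Lions is inherited from the stated assumption.

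The main technical obstacle is confirming that the off-diagonal mixed derivatives of $\vp$ on the product manifold are correctly captured by the Euclidean Hessian of $\tilde{\vp}$ at the origin: this requires a careful but routine verification using the fact that at each base point the normal coordinate map has derivative equal to the identity on each factor, while higher-order deviations vanish at the center and therefore do not contribute to second derivatives evaluated at $(0, \ldots, 0)$. Once this identification is established, the passage from the Euclidean Crandall--Ishii conclusion to the manifold version involves no residual curvature correction and yields the theorem as stated.
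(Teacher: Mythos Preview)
The paper does not prove this theorem at all: it is stated in Section~2.2 as a known tool, explicitly described as ``a restatement of \cite[Theorem 8.3]{CIL} for Riemannian manifolds'' with pointers to \cite[Section 2.2]{I} and \cite[Theorem 3.8]{AFS1} for the manifold formulation. So there is no proof in the paper to compare against.

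Your outline is the standard route taken in those cited references: pull back to $\R^{N_i}$ via normal coordinates at each $\hat{x}_i$, apply the Euclidean Crandall--Ishii--Lions theorem, and push the conclusion back using that in normal coordinates the Christoffel symbols vanish at the center so Euclidean and covariant Hessians agree there. The only point where you should be a bit more careful is the transfer of the technical hypothesis on $b_i$: that condition must be verified for the Euclidean superjets of $\tilde u_i$ at points \emph{near} the origin, where the coordinate Hessian and the covariant Hessian differ by $\Gamma^k_{ij}\partial_k$ terms. Since the Christoffel symbols are bounded on a small ball and the hypothesis already bounds $|q_i|$, this discrepancy is controlled and the condition still holds (with a possibly larger $C$). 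With that caveat, your argument is correct and matches the approach in the references the paper cites.
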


\section{Modulus of continuity estimates on manifolds}
For any given constant $\kappa $,  let
$$c_\kappa(t)=\left\{ \begin{matrix} \cos \sqrt{\kappa} t, & \, \kappa>0,\cr
                                   1, &\, \kappa=0, \cr
                                   \cosh \sqrt{|\kappa|}t, &\, \kappa<0.\end{matrix}\right.$$
Note that $c_{\kappa}(s)$ satisfies $c_{\kappa}'' + \kappa c_{\kappa} =0, c_{\kappa}(0)=1, c_{\kappa}'(0)=0$.
The following theorem is a generalization to viscosity solutions of Theorem 1 in \cite{AC3}.
\begin{theorem}\label{th1}
Let $u: M\times [0,T)\rightarrow \mathbb{R}$  be a viscosity solution of (\ref{1.1}) on a closed manifold
$M$ and denote by $D$ the diameter of $M$.
Assume further that $\text{Ric}_g\ge (n-1)\kappa g$. Then the modulus of continuity
$w: [0, \frac{D}{2}]\times [0,T)\rightarrow \mathbb{R}$ of $u$ satisfies
\begin{equation}
w_t\le \alpha(w', t)w''+(n-1)\frac{c_{\kappa}'(s)}{c_{\kappa}(s)} \beta(w', t)w' 
\end{equation}
in the viscosity sense, provided $\omega$ is increasing in $s$.
\end{theorem}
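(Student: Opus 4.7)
The plan is to argue by contradiction using a two-point maximum principle on $M \times M$, adapting the method of Andrews--Clutterbuck to the viscosity setting by invoking Theorem \ref{max prin}. Assume the claim fails: there exist $(s_0, t_0)$ with $s_0 > 0$ and a $C^{2,1}$ test function $\phi$ with $\phi \ge w$ locally, $\phi(s_0, t_0) = w(s_0, t_0)$, and
\[
\phi_t - \alpha(\phi', t_0) \phi'' - (n-1) \frac{c_\kappa'(s_0)}{c_\kappa(s_0)} \beta(\phi', t_0) \phi' > 0 \quad \text{at } (s_0, t_0).
\]
After standard perturbations of $\phi$ by $\varepsilon(s - s_0)^2 + \varepsilon (t - t_0)^2$ (plus a tiny linear-in-$s$ term, which monotonicity of $w$ allows one to absorb while keeping $\phi \ge w$), the strict inequality persists in a neighborhood. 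Now define
\[
Z(x, y, t) := u(y, t) - u(x, t) - 2 \phi\bigl(d(x, y)/2, t\bigr).
\]
By the definition of $w$ and $\phi \ge w$, one has $Z \le 0$ near any pair with $d(x, y) = 2s_0$; compactness of $M$ ensures $w(s_0, t_0)$ is attained at some pair $(x_0, y_0)$ with $d(x_0, y_0) = 2 s_0$, giving $Z(x_0, y_0, t_0) = 0$, which after perturbation is a strict local maximum.

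Apply Theorem \ref{max prin} with $u_1 = u(y, t)$, $u_2 = -u(x, t)$ and test function $\Phi(x, y, t) := 2\phi(d(x, y)/2, t)$. This produces, for every $\lambda > 0$, real numbers $b_1, b_2$ and symmetric forms $X_1$ on $T_{y_0}M$, $X_2$ on $T_{x_0}M$ with
\[
(b_1, D_y\Phi, X_1) \in \overline{\mathcal{P}}^{2,+} u(y_0, t_0), \qquad (-b_2, -D_x\Phi, -X_2) \in \overline{\mathcal{P}}^{2,-} u(x_0, t_0),
\]
$b_1 + b_2 = 2 \phi_t(s_0, t_0)$, and $\operatorname{diag}(X_1, X_2) \le D^2 \Phi + \lambda (D^2\Phi)^2$. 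Let $\gamma : [0, 2s_0] \to M$ be the minimizing geodesic from $x_0$ to $y_0$ (assuming $y_0 \notin \mathrm{Cut}(x_0)$, which is handled by an upper barrier), extend $e_1 = \gamma'(0)$ to an orthonormal basis $\{e_1, \dots, e_n\}$ of $T_{x_0} M$, and write $\tilde e_i$ for the parallel transport of $e_i$ along $\gamma$ to $y_0$. A direct computation gives $D_y\Phi = \phi'(s_0, t_0) \tilde e_1$ and $-D_x\Phi = \phi'(s_0, t_0) e_1$, so both gradients have common norm $\phi'(s_0, t_0)$.

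The heart of the proof is to test the matrix inequality on the radial pair $V_r = (\tilde e_1, -e_1)$ and the transverse pairs $V_i = (\tilde e_i, e_i)$ for $i = 2, \dots, n$. Along the synchronous radial variation $(\gamma(2s_0 + s), \gamma(-s))$ the distance $d$ grows linearly, so $D^2 \Phi(V_r, V_r) = 2\phi''(s_0, t_0)$, and hence
\[
X_1(\tilde e_1, \tilde e_1) + X_2(e_1, e_1) \le 2 \phi''(s_0, t_0) + O(\lambda).
\]
On the transverse directions the first-order variation of $d$ vanishes, and a Jacobi field comparison using the model fields $J_i(\tau) = (c_\kappa(\tau - s_0)/c_\kappa(s_0)) P_\tau e_i$ with $\Ric \ge (n-1)\kappa g$, combined with $\int_0^{2s_0}[(c_\kappa')^2 - \kappa c_\kappa^2](\tau - s_0)\,d\tau = 2 c_\kappa(s_0) c_\kappa'(s_0)$, yields
\[
\sum_{i=2}^n D^2 \Phi(V_i, V_i) = \phi'(s_0, t_0) \sum_{i=2}^n D^2 d(V_i, V_i) \le 2(n-1) \phi'(s_0, t_0) \frac{c_\kappa'(s_0)}{c_\kappa(s_0)}.
\]
Apply the viscosity subsolution inequality at $(y_0, t_0)$ and supersolution inequality at $(x_0, t_0)$; since $|D_y\Phi| = |D_x\Phi| = \phi'$, the coefficients $\alpha(\phi', t_0), \beta(\phi', t_0), b(\phi', t_0)$ coincide, the $b$-term cancels, and subtraction gives
\[
b_1 + b_2 \le \alpha \bigl[X_1(\tilde e_1, \tilde e_1) + X_2(e_1, e_1)\bigr] + \beta \sum_{i=2}^n \bigl[X_1(\tilde e_i, \tilde e_i) + X_2(e_i, e_i)\bigr].
\]
Inserting the matrix bounds and sending $\lambda \to 0^+$ produces $2\phi_t \le 2\alpha \phi'' + 2(n-1) \beta (c_\kappa'/c_\kappa) \phi'$, contradicting the strict inequality.

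I anticipate three main technical difficulties. The most delicate is (i) the possible non-smoothness of $d$ at the cut locus, which I would resolve by replacing $d$ near $(x_0, y_0)$ by a smooth upper barrier agreeing with $d$ to second order along the minimizing geodesic, so that the same jet computations apply. The other two are (ii) implementing the Jacobi field comparison inside the matrix-inequality framework of Theorem \ref{max prin} rather than in the classical pointwise smooth setting, and (iii) arranging the perturbations in the first step so that monotonicity of $w$ in $s$ is preserved and the strict local maximum of $Z$ truly occurs at the configuration $d(x_0, y_0) = 2s_0$.
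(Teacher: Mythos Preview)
Your approach is essentially the paper's: double variables, pick the maximizing pair $(x_0,y_0)$, apply Theorem~\ref{max prin}, test the resulting matrix inequality on the radial pair $(e_n(1),-e_n(0))$ and on transverse pairs built from $c_\kappa$-weighted parallel fields, and let $\lambda\to0$. The paper argues directly rather than by contradiction (so your perturbations and difficulty~(iii) disappear), and it resolves your difficulty~(i) concretely by replacing $d$ with the explicit smooth upper support $\rho(x,y)$ defined as the length of $s\mapsto\exp_{\gamma_0(s)}\bigl(\sum_i((1-s)a_i(x)+sb_i(y))V_i(s)\bigr)$ with $V_i(s)=\tfrac{c_\kappa((2s-1)s_0)}{c_\kappa(s_0)}e_i(s)$; since $\rho\ge d$ with equality at $(x_0,y_0)$, the function $\psi=2\phi(\rho/2,t)$ is genuinely $C^2$, Theorem~\ref{max prin} applies verbatim, and the second variations of $\rho$ in these directions are \emph{exactly} the index-form quantities you need, dispatching (ii) at the same stroke.
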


\begin{proof}
From the definition of viscosity solution, it suffices to show the following
\begin{itemize}
\item[]\emph{For any given $(s_0, t_0)$, a small neighborhood $U$ of $s_0$, $\epsilon_0>0$, and any smooth function
        $\phi$ lying above $w$ for $U\times (t_0-\epsilon_0, t_0+\epsilon_0)$ with equality at $(s_0, t_0)$, then
        \begin{equation}
        \phi_t\le \alpha(\phi', t_0)\phi''+(n-1)\frac{c_{\kappa}'}{c_{\kappa}}\beta(\phi',t_0)\phi' \label{2.2}
        \end{equation}
        holds at $(s_0, t_0)$.} 
\end{itemize}

Let $\phi$ be a smooth function lying above $w$ for $U\times (t_0-\epsilon_0, t_0+\e_0)$ with equality at $(s_0, t_0)$.
The assumption that $\omega$ is increasing in $s$ implies $\phi'(s_0,t_0) \geq 0$.
Since $M$ is compact, there exist $x_0$ and $y_0$ in $M$ with $d(x_0, y_0)=2s_0$ such that
$$u(y_0, t_0)-u(x_0, t_0)=2w(s_0,t_0)=2\phi(s_0,t_0).$$
Then it follows that
$$u(y,t)-u(x,t)-2\phi(\frac{d(x,y)}{2},t) $$
attains a local maximum at $(x_0,y_0,t_0)$. Note that the distance function $d$ may not be smooth at $(x_0,y_0)$, so one cannot apply the maximum principle for semicontinuous functions on manifolds directly.
To overcome this, we replace $d$ by a smooth function $\rho$, which is defined as follows.
Let $U_{x_0}$ and $U_{y_0}$ be small neighborhoods of $x_0$ and $y_0$ respectively.
Let $\gamma_0: [0,1]\rightarrow M$ be a minimizing geodesic joining $x_0$ and $y_0$ with $|\gamma_0'|=2s_0$.
We choose Fermi coordinates $ \left\{e_i(s)\right\}$ $(i=1,2,\cdots, n)$ along $\gamma_0$ with
$e_n(s)=\gamma_0'(s)$ for $s\in[0,1]$.
For $1\le i\le n-1$, we define $V_i(s)$ along $\gamma_0(s)$ by
$$V_i(s)=\frac{c_{\kappa}\left((2s-1)s_0\right)}{c_{\kappa}(s_0)}e_i(s),$$
and set $V_n(s)=e_n(s)$.
We then define a smooth function $\rho(x, y)$ in  $U_{x_0}\times U_{y_0}$ to be the length of the curve
$\exp_{\gamma_0(s)}\left(\sum_{i=1}^{n}\left((1-s)a_i(x)+sb_i(y)\right) V_i(s)\right)$ $(s\in [0,1])$,
where $a_i(x)$ and $b_i(x)$ are so defined that
$$x=\exp_{x_0}\left(\sum_{i=1}^{n}a_i(x)e_i(0)\right), \text{\quad\quad}
y=\exp_{y_0}\left(\sum_{i=1}^{n}b_i(y)e_i(1)\right).$$
From the definition of $\rho(x, y)$, we see $d(x,y)\le \rho(x, y)$ and with equality at $(x_0, y_0)$.
We write $\psi(x,y,t)=2\phi(\frac{\rho(x,y)}{2},t)$. Then the function
$$Z(y, x, t):=u(y,t)-u(x, t)-\psi(x,y,t)$$
has a local maximum at $(x_0,y_0,t_0)$.
Now we can apply the parabolic version maximum principle for semicontinuous functions on manifolds to conclude that
for each $\lambda >0$, there exist symmetric tensors $X, Y$ such that
\begin{equation*}
   (b_1, D_y \psi (x_0, y_0, t_0), X) \in \overline{\mathcal{P}}^{2,+} u(y_0,t_0),
  \end{equation*}
 \begin{equation*}
 (-b_2, - D_x \psi (x_0, y_0, t_0), Y) \in \overline{\mathcal{P}}^{2,-} u(x_0,t_0),
 \end{equation*}
 \begin{equation*}
 b_1+b_2= \psi_t (x_0, y_0, t_0)=2 \phi_t(s_0,t_0),
 \end{equation*}
  \begin{equation}
    \begin{pmatrix}
   X & 0 \\
   0 & -Y
   \end{pmatrix}
   \leq M+\lambda M^2,
  \end{equation}
 where $M=D^2 \psi(x_0,y_0,t_0)$.

The first derivative of $\psi$ yields
\begin{equation}
D_y \psi (x_0, y_0, t_0)=\phi'(s_0, t_0)\frac{\gamma'_0(1)}{2s_0}, \label{2.3}
\end{equation}
and
\begin{equation}
D_x \psi (x_0, y_0, t_0)=-\phi'(s_0, t_0)\frac{\gamma'_0(0)}{2s_0}. \label{2.4}
\end{equation}

Since $u$ is both a subsolution and a supersolution of (\ref{1.1}), we have
$$b_1\le \operatorname{tr}(A(|\phi'|)X)+b(|\phi'|, t_0),$$
and
$$-b_2\ge \operatorname{tr}(A(|\phi'|)Y)+b(|\phi'|, t_0),$$
where $$
A=\left(
    \begin{array}{cccc}
      \beta(|\phi'|, t_0) & \cdots & 0 & 0 \\
     \vdots & \vdots & \vdots & \vdots \\
     0 & \cdots &\beta(|\phi'|,t_0) & 0 \\
      0 & \cdots & 0 & \alpha(|\phi'|, t_0) \\
    \end{array}
  \right).
$$
Set
$$C=\left(
    \begin{array}{cccc}
      \beta(|\phi'|, t_0) & \cdots & 0 & 0 \\
     \vdots & \vdots & \vdots & \vdots \\
     0 & \cdots &\beta(|\phi'|,t_0) & 0 \\
      0 & \cdots & 0 & -\alpha(|\phi'|, t_0) \\
    \end{array}
  \right),$$
and simple calculation shows $\left(
                           \begin{array}{cc}
                             A & C \\
                            C & A \\
                           \end{array}
                         \right)\ge 0$.
Then we obtain that
\begin{eqnarray*}
2\phi_t(s_0, t_0)&=&b_1+b_2\le \operatorname{tr}\left[\left(
                                                        \begin{array}{cc}
                                                          A & C \\
                                                         C & A \\
                                                        \end{array}
                                                      \right)
\left(
\begin{array}{cc}
 X & 0 \\
  0 & -Y \\
   \end{array}
   \right)
\right]\nonumber\\
&\le&\operatorname{tr}\left[\left(
                                                        \begin{array}{cc}
                                                          A & C \\
                                                         C & A \\
                                                        \end{array}
                                                      \right)M
\right]
+\lambda\operatorname{tr}\left[\left(
                                                        \begin{array}{cc}
                                                          A & C \\
                                                         C & A \\
                                                        \end{array}
                                                      \right)M^2
\right].\nonumber\\
\end{eqnarray*}
We easily get
\begin{eqnarray}
\operatorname{tr}\left[\left(
                                                        \begin{array}{cc}
                                                          A & C \\
                                                         C & A \\
                                                        \end{array}
                                                      \right)M
\right]
&=&\alpha(|\phi'|, t_0)D^2\psi\left((e_n(1),-e_n(0)), (e_n(1),-e_n(0))\right)\\ \nonumber
&& +\beta(|\phi'|, t_0)\sum_{i=1}^{n-1}D^2\psi\left((e_i(1),e_i(0)), (e_i(1),e_0(0))\right) \label{2.6}
\end{eqnarray}

It remains to estimate the terms involving second derivatives of $\psi$.
The estimate is analogous to \cite[Theorem 3]{AC3}.
For $1\le i\le n-1$, we choose the variation vector fields $V_i(s)=\frac{c_{\kappa}\left((2s-1)s_0\right)}{c_{\kappa}(s_0)}e_i(s)$ along $\gamma_0(s)$,
then the first variation formulas gives
$$
\left.\frac{d}{dv}\right|_{v=0}|\gamma_v|=\frac{1}{2s_0} g(\gamma', V_i)|^1_0=0,
$$
and the second variation formula gives
\begin{equation}
\left.\frac{d^2}{dv^2}\right|_{v=0}|\gamma_v|=\left.\frac{1}{2s_0} g(\gamma', \nabla_{V_i} V_i)\right.|^1_0+\frac{1}{2s_0}
\int_0^1|(\nabla_{\gamma'} V_i)^{\bot}|^2-\langle R(\gamma', V_i)\gamma', V_i\rangle\ ds. \label{2.7}
\end{equation}
By the way of variation, we can also require $\nabla_{V_i} V_i=0$ for $s\in [0,1]$.
Therefore direct calculation gives
$$
\frac{1}{2s_0}\int_0^1|(\nabla_{\gamma'} V_i)^{\bot}|^2 ds=2s_0\int_0^1(\frac{c_{\kappa}'\left((2s-1)s_0\right)}{c_{\kappa}(s_0)})^2ds=
\int_{-s_0}^{s_0}(\frac{c_{\kappa}'(x)}{c_{\kappa}(s_0)})^2dx.
$$
Using the integration by parts, the definition of $c_\kappa$ and equation $c_{\kappa}''+\kappa c_\kappa=0$, we have
$$
\int_{-s_0}^{s_0}(\frac{c_{\kappa}'(x)}{c_{\kappa}(s_0)})^2dx=2\frac{c_{\kappa}'(s_0)}{c_{\kappa}(s_0)}
+\int_{-s_0}^{s_0}\kappa(\frac{c_{\kappa}(x)}{c_{\kappa}(s_0)})^2dx.
$$
Combining with (\ref{2.7}), we see
$$
\left.\frac{d^2}{dv^2}\right|_{v=0}|\gamma_v|=2\frac{c_{\kappa}'(s_0)}{c_{\kappa}(s_0)}
+\int_{-s_0}^{s_0}(\frac{c_{\kappa}(x)}{c_{\kappa}(s_0)})^2 (\kappa-\langle R(e_n, e_i)e_n, e_i\rangle)dx.
$$

Then we conclude 
that
\begin{eqnarray}
&&\frac 1 2 D^2\psi\left((e_i(1),e_i(0)), (e_i(1),e_i(0))\right)\nonumber \\
&=&\left.\frac{d^2}{dv^2}\right|_{v=0} \phi \left( \frac 1 2 \rho\left(\exp_{x_0} ve_i(0), \exp_{y_0} ve_i(1)\right),t_0 \right) \nonumber \\
&=& \left.\frac{d^2}{dv^2}\right|_{v=0} \phi \left(\frac 1 2 L\left[\left(\exp_{\gamma_0(s)} \left((1-s)v+sv\right) V_i(s)\right)\right],t_0\right) \nonumber \\
&=& \left.\frac{d^2}{dv^2}\right|_{v=0} \phi \left(\frac 1 2 L\left[\left(\exp_{\gamma_0(s)} v V_i(s)\right)\right],t_0\right)\nonumber \\
&=&\left.\frac{d^2}{dv^2}\right|_{v=0}\phi(\frac{|\gamma_v|}{2}, t_0) \nonumber \\
&=&\phi'(s_0, t_0)\left(\frac{c_{\kappa}'(s_0)}{c_{\kappa}(s_0)}
+\frac{1}{2}\int_{-s_0}^{s_0}(\frac{c_{\kappa}(x)}{c_{\kappa}(s_0)})^2 (\kappa-\langle R(e_n, e_i)e_n, e_i\rangle)dx\right),\label{3.7}
\end{eqnarray}
from which, the following holds
\begin{eqnarray}
&&\sum_{i=1}^{n-1}\frac 1 2 D^2\psi\left((e_i(1),e_i(0)), (e_i(1),e_0(0))\right)\nonumber\\
&=&(n-1)\phi'(s_0, t_0)\left(\frac{c_{\kappa}'(s_0)}{c_{\kappa}(s_0)}
+\frac{1}{2}\int_{-s_0}^{s_0}(\frac{c_{\kappa}(x)}{c_{\kappa}(s_0)})^2 ((n-1)\kappa-\operatorname{Ric}(e_n, e_n))\ dx\right)\nonumber\\
&\le&(n-1)\frac{c_{\kappa}'(s_0)}{c_{\kappa}(s_0)}\phi'(s_0, t_0), \label{2.8}
\end{eqnarray}
where we used the curvature assumption.

It follows easily from the variation along $e_n$ that
\begin{equation}
\frac 1 2 D^2\psi\left((e_n(1),-e_n(0)), (e_n(1),-e_n(0))\right)=\phi''(s_0, t_0). \label{2.9}
\end{equation}
Thus we conclude from (\ref{2.6}), (\ref{2.8}) and (\ref{2.9}) that
\begin{eqnarray}
\operatorname{tr}\left[\left(
                                                        \begin{array}{cc}
                                                          A & C \\
                                                         C & A \\
                                                        \end{array}
                                                      \right)M
\right]\le 2\beta(\phi', t_0)(n-1)\frac{c_{\kappa}'(s_0)}{c_{\kappa}(s_0)}\phi'(s_0, t_0)+2\alpha(\phi', t_0)\phi''(s_0, t_0). \nonumber\\
\label{2.10}
\end{eqnarray}
Since $\lambda>0$ is arbitrary, (\ref{2.2}) comes true from (\ref{2.6}) and (\ref{2.10}). Hence we complete the proof.
\end{proof}

As an immediate corollary, we have the following Ricci flow version,
which generalizes Theorem 4 in \cite{A}.
\begin{thm}[Ricci flow version]\label{thr}
Let $M^n$ be a closed Riemannian manifold, and $g(t)$ a family of time-dependent metrics on $M$
satisfying $\frac{\partial g}{\partial t}\ge-2\text{Ric} $, and let $u: M\times [0,T)\rightarrow \mathbb{R}$
be a viscosity solution of (\ref{1.1}). Then the modulus of continuity $w:[0, \frac{D}{2}]\times [0,T) \to \R$ of $u$
satisfies
$$w_t\le \alpha(w', t)w''$$
in the viscosity sense, provided $\omega$ is increasing in $s$.
\end{thm}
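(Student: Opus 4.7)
My plan is to follow the proof of Theorem \ref{th1} with two structural modifications: (i) the variation vector fields along the minimizing geodesic will be simply parallel rather than $c_\kappa$-weighted, since no static curvature bound is imposed; and (ii) the smooth comparison function $\rho$ approximating the distance must now depend on $t$ through the evolving metric $g(t)$, and the extra term this introduces in $\psi_t$ is precisely what the hypothesis $\frac{\partial g}{\partial t} \geq -2\Ric$ is designed to absorb.

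Concretely, given a test function $\phi$ lying above $w$ near $(s_0, t_0)$ with equality at $(s_0, t_0)$, I would first use compactness of $M$ to select $x_0, y_0 \in M$ with $d_{g(t_0)}(x_0, y_0) = 2s_0$ realizing the supremum, and fix a unit-speed $g(t_0)$-minimizing geodesic $\gamma_0 : [0, 2s_0] \to M$ from $x_0$ to $y_0$ with parallel orthonormal Fermi frame $\{e_1, \ldots, e_n\}$, $e_n = \gamma_0'$. Following the same construction as in Theorem \ref{th1} but with the simpler parallel fields $V_i(s) = e_i(s)$, I would define a smooth family of curves $\sigma_{x,y}$ near $(x_0, y_0)$ with $\sigma_{x_0, y_0} = \gamma_0$, and then set $\rho(x, y, t) := L_{g(t)}(\sigma_{x,y})$. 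This smooth function satisfies $\rho \geq d_{g(t)}$ with equality at $(x_0, y_0, t_0)$, so writing $\psi(x, y, t) = 2\phi(\rho/2, t)$ the function $Z(y, x, t) := u(y, t) - u(x, t) - \psi$ has a local maximum at $(y_0, x_0, t_0)$. Theorem \ref{max prin} then supplies, for each $\lambda > 0$, jets $b_1, b_2, X, Y$ with $b_1 + b_2 = \psi_t(x_0, y_0, t_0)$ and the usual matrix inequality for $\operatorname{diag}(X, -Y)$ in terms of $M = D^2 \psi$.

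The new computation is the time derivative: since $\sigma_{x_0, y_0} = \gamma_0$ is fixed, the first variation of arc length in $t$ gives
\begin{equation*}
\partial_t \rho(x_0, y_0, t_0) = \tfrac{1}{2}\int_0^{2s_0}(\partial_t g)(e_n, e_n)\, ds \geq -\int_0^{2s_0}\Ric(e_n, e_n)\, ds
\end{equation*}
by the Ricci-flow supersolution hypothesis. On the spatial side, the second-variation computation of Theorem \ref{th1} with the parallel fields (where the boundary $c_\kappa'/c_\kappa$ term is identically zero) bounds the trace by $2\alpha(\phi',t_0) \phi''(s_0, t_0) - \beta(\phi',t_0) \phi'(s_0, t_0)\int_0^{2s_0}\Ric(e_n, e_n)\, ds + O(\lambda)$. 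Substituting $\psi_t = 2\phi_t + \phi' \partial_t \rho$ into $b_1 + b_2 \leq$ this trace bound and using the above lower bound on $\partial_t \rho$, the Ricci integrals from the spatial and temporal sides cancel, and letting $\lambda \to 0$ yields $\phi_t(s_0, t_0) \leq \alpha(\phi',t_0) \phi''(s_0, t_0)$, as desired.

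The main and essentially only substantive step is this cancellation between the spatial second-variation Ricci contribution and the temporal first-variation Ricci contribution, which is quantitatively matched by the hypothesis $\frac{\partial g}{\partial t} \geq -2\Ric$. Apart from this, every step is a direct bookkeeping adaptation of the corresponding step in Theorem \ref{th1}, so I anticipate no further technical obstacles.
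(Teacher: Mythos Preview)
Your approach is exactly the one the paper takes: parallel variation fields $V_i=e_i$ in place of the $c_\kappa$–weighted ones, a time–dependent $\rho(x,y,t)=L_{g(t)}(\sigma_{x,y})$, and the use of $\partial_t g\ge -2\Ric$ to bound $\partial_t\rho$. The paper's argument is terser (it writes $2\phi_t-\int\Ric\le b_1+b_2\le 2\alpha\phi''-\int\Ric$ and declares the proof complete), while you keep track of the factors $\phi'$ and $\beta$ more carefully.

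That extra care, however, exposes a real problem in your final cancellation step, and the same problem is hidden by the paper's dropped factors. From $b_1+b_2=\psi_t=2\phi_t+\phi'\,\partial_t\rho$ and your trace bound you get
\[
2\phi_t+\phi'\,\partial_t\rho \;\le\; 2\alpha\,\phi'' \;-\;\beta\,\phi'\!\int_{0}^{2s_0}\!\Ric(e_n,e_n)\,ds \;+\;O(\lambda),
\]
and the hypothesis only gives $-\phi'\,\partial_t\rho\le \phi'\!\int\Ric$. Substituting yields
\[
2\phi_t \;\le\; 2\alpha\,\phi'' \;+\;(1-\beta)\,\phi'\!\int_{0}^{2s_0}\!\Ric(e_n,e_n)\,ds \;+\;O(\lambda),
\]
so the Ricci integrals cancel \emph{only when $\beta\equiv 1$}. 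For the general isotropic flow \eqref{1.1} there is no sign control on $\int\Ric$ along $\gamma_0$, and the residual term cannot be discarded. The paper's displayed inequalities simply omit the $\phi'$ and $\beta$ coefficients; once reinstated, its proof has the same gap. Thus your outline is faithful to the paper and is complete for the Laplacian case $\alpha=\beta=1$ (which is the content of Andrews' original Theorem~4 and of the analogous Section~5 results), but for arbitrary $\beta$ neither argument, as written, closes.
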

\begin{proof}
As before, we consider a smooth function $\phi$ which lies above the modulus of continuity $w$ and
is equal at $(s_0, t_0)$. Then via maximum principle as before, it holds that
\begin{eqnarray*}
2\phi_t(s_0, t_0)-\int_{-s_0}^{s_0} \operatorname{Ric}(e_n(s), e_n(s))\ ds\le b_1+b_2
\le \operatorname{tr}\left[\left(
                                                        \begin{array}{cc}
                                                          A & C \\
                                                         C & A \\
                                                        \end{array}
                                                      \right)M
\right].
\end{eqnarray*}
On the other hand, choosing the variation fields $V_i(s)=e_i(s)$ yields
$$\operatorname{tr}\left[\left(
                                                        \begin{array}{cc}
                                                          A & C \\
                                                         C & A \\
                                                        \end{array}
                                                      \right)M
\right]\le 2\alpha(\phi', t_0)\phi''(s_0, t_0)-\int_{-s_0}^{s_0} \operatorname{Ric}(e_n(s), e_n(s))\ ds,$$
completing the proof. Here we used inequality in (\ref{2.8}) with $c_\kappa=1$ and inequality (\ref{2.9}).
\end{proof}

\section{Height-dependent gradient bounds}
In this section, we obtain height-dependent gradient bounds for viscosity solutions, generalizing Theorem 6 in \cite{A}.
\begin{theorem}\label{thmh}
Let $(M^n, g)$ be a closed Riemannian manifold with diameter $D$ and Ricci curvature satisfying
$\text{Ric}_g\ge 0$, and suppose  $u: M\times [0,T)\rightarrow \mathbb{R}$ is a viscosity solution of an
equation of the form
\begin{equation}
\frac{\partial u}{\partial t}=\left[\alpha(|D u|,u, t)\frac{D_iu D_ju}{|Du|^2}
+ \beta(t)\left(\delta_{ij}-\frac{D_iu D_ju}{|Du|^2}\right)\right]D_iD_ju .\label{c1}
\end{equation}
Let $\varphi: [0,D]\times [0,T)\rightarrow \mathbb{R}$ be a solution of
$$\varphi_t=\alpha(\varphi', \vp, t)\varphi'' ,$$
with Neumann boundary condition, which is increasing in the first variable, such that the
range of $u(\cdot, 0)$ is contained in $[\varphi(0,0), \varphi(D,0)]$. Let $\Psi(s,t)$ be given by
inverting $\varphi$ for each $t$, and assume that for all $x$ and $y$ in $M$,
$$\Psi(u(y,0),0)-\Psi(u(x,0),0)-d(x,y)\le 0.$$
Then
$$
\Psi(u(y,t),t)-\Psi(u(x,t),t)-d(x,y)\le 0.
$$
for all $x,y\in M$ and $t\in[0,T)$.
\end{theorem}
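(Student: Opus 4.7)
The plan is to run the same contradiction-plus-comparison scheme used in the proof of Theorem \ref{th1}, but with two modifications: compare the \emph{$\Psi$-gauge} $\pi(z,t) := \Psi(u(z,t),t)$ of $u$ to the distance function, and use the one-dimensional equation $\varphi_t = \alpha(\varphi',\varphi,t)\varphi''$ to cancel the ``normal direction'' contribution. Suppose the conclusion fails; for a small $\varepsilon>0$, the function $F(x,y,t) := \pi(y,t) - \pi(x,t) - d(x,y) - \varepsilon t$ is $\le 0$ initially but positive somewhere, so there is a first time $t_0 \in (0,T)$ with $\max F = 0$ attained at a pair $(x_0,y_0)$, and $x_0 \ne y_0$ because $F(x,x,t) = -\varepsilon t$.

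Near $(x_0,y_0)$ I would replace $d$ by the smooth upper barrier $\rho(x,y)$ built exactly as in the proof of Theorem \ref{th1}, but now with the parallel variation fields $V_i(s) = e_i(s)$ (since $\kappa = 0$ forces $c_\kappa \equiv 1$). Then $(x_0,y_0,t_0)$ remains a local maximum of $\pi(y,t) - \pi(x,t) - \rho(x,y) - \varepsilon t$. Applying Theorem \ref{max prin} to $u_1 = \pi$, $u_2 = -\pi$ with test function $\rho + \varepsilon t$ yields, for each $\lambda>0$, jets $(b_1,D_y\rho,X_1) \in \ol{\mathcal{P}}^{2,+}\pi(y_0,t_0)$ and $(-b_2,-D_x\rho,-X_2)\in \ol{\mathcal{P}}^{2,-}\pi(x_0,t_0)$ with $b_1+b_2=\varepsilon$ and $\operatorname{diag}(X_1,X_2)\le M+\lambda M^2$ for $M = D^2\rho(x_0,y_0)$. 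Because $u = \varphi(\pi,t)$ with $\varphi$ smooth and strictly increasing in the first variable, composing test functions with $\varphi(\cdot,t)$ translates these into elements of $\ol{\mathcal{P}}^{2,\pm}u$ at the corresponding points; the chain rule produces the expected $\varphi''\, p\otimes p + \varphi'\, X$ structure in the Hessian component. Writing $\sigma_y := \pi(y_0,t_0)$ and $\sigma_x := \pi(x_0,t_0)$, the identities $\varphi(\sigma_y,t_0) = u(y_0,t_0)$ and $\varphi(\sigma_x,t_0) = u(x_0,t_0)$ force the $\alpha$-coefficient appearing in the $\varphi$-equation to exactly match the one appearing in the $u$-equation at each point. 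After using $\varphi_t = \alpha\varphi''$ and dividing by $\varphi'>0$, the two viscosity inequalities collapse to
\[
b_j \ \le\ \alpha_j\, (X_j)_{nn} + \beta(t_0)\bigl[\operatorname{tr}(X_j) - (X_j)_{nn}\bigr], \qquad j = 1,2,
\]
where $\alpha_j \ge 0$ and the subscript $n$ denotes the $e_n$ direction.

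The final ingredient is a direct computation showing that when only one endpoint is varied along $\pm e_n$, the curve defining $\rho$ is simply (a reparametrisation of) the extended geodesic $\gamma_0$, whose length depends linearly on the variation parameter. Hence $D^2\rho$ vanishes identically on the 2-plane spanned by $(e_n(1),0)$ and $(0,e_n(0))$, and the matrix inequality degenerates to $\operatorname{diag}((X_1)_{nn},(X_2)_{nn}) \le O(\lambda)$, giving \emph{separate} upper bounds $(X_1)_{nn}, (X_2)_{nn}\le 0$ in the limit $\lambda\to 0$. The tangential directions are handled exactly as in Theorem \ref{th1} (using $\operatorname{Ric}\ge 0$ in place of $\operatorname{Ric}\ge(n-1)\kappa g$) to obtain $\sum_{i<n}[(X_1)_{ii}+(X_2)_{ii}] \le 0$. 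Summing the two estimates and using $\alpha_j, \beta(t_0)\ge 0$ yields $\varepsilon = b_1+b_2 \le 0$, contradicting $\varepsilon>0$. The main obstacle is precisely the height dependence of $\alpha(\cdot,u,\cdot)$: since $u(y_0,t_0)\ne u(x_0,t_0)$ in general, the coefficients $\alpha_1,\alpha_2$ differ, so one cannot collapse $\alpha_1(X_1)_{nn}+\alpha_2(X_2)_{nn}$ from the weaker combined estimate $(X_1)_{nn}+(X_2)_{nn}\le 0$ alone; the observation that $\rho$ actually coincides with $d$ along the geodesic direction (rather than merely $\rho\ge d$ as elsewhere) is what supplies the individual sign information and allows the argument to close.
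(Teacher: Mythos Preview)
Your proposal is correct and follows essentially the same route as the paper's proof: penalize, replace $d$ by the smooth barrier $\rho$, apply Theorem~\ref{max prin} to $\Psi(u)$, translate to jets of $u$ via the chain rule (the paper's Lemma~\ref{Lemma}), use $\varphi_t=\alpha\varphi''$ to cancel the $\varphi''$ contribution, and then exploit the vanishing of $D^2\rho$ under single-endpoint $e_n$ variations to deal with the height-dependent $\alpha$.  The only differences are cosmetic: the paper packages the final step via the positive block matrix $\bigl(\begin{smallmatrix}A_1 & C\\ C & A_2\end{smallmatrix}\bigr)$ with $C_{nn}=0$ and a single trace inequality rather than restricting the matrix inequality to subspaces directly, and it uses the penalty $\tfrac{\varepsilon}{T-t}$ instead of $\varepsilon t$, which avoids the minor ``first time'' technicality when invoking Theorem~\ref{max prin}.
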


We begin with a lemma about the behavior of parabolic semijets when composed with an increasing function.

\begin{lemma}\label{Lemma}
Let $u$ be a continuous function. Let $\vp:\R \times [0, T) \to \R $ be a $C^{2,1}$ function with $\vp^{\prime} \geq 0$.
Let $\Psi:\R \times [0, T) \to \R $ be such that
$$\Psi(\vp(u(y,t),t),t)=u(y,t)$$
$$\vp (\Psi(u(y,t),t),t)=u(y,t)$$
(i) Suppose $(\tau, p, X) \in \mathcal{P}^{2,+}\Psi(u(y_0,t_0),t_0)$, then
$$(\vp_t+\vp^{\prime}\tau, \vp^{\prime}p, \vp^{\prime \prime}p \otimes p+ \vp^{\prime} X ) \in \mathcal{P}^{2,+}  u(y_0,t_0),$$
where all derivatives of $\vp$ are evaluated at $(\Psi(u(y_0,t_0)), t_0)$.

(ii) Suppose $(\tau, p, X) \in \mathcal{P}^{2,-}\Psi(u(y_0,t_0),t_0)$, then
$$(\vp_t+\vp^{\prime}\tau, \vp^{\prime}p, \vp^{\prime \prime}p \otimes p+ \vp^{\prime} X ) \in \mathcal{P}^{2,-}  u(y_0,t_0),$$
where all derivatives of $\vp$ are evaluated at $(\Psi(u(y_0,t_0)), t_0)$.

(iii) The same holds if one replaces the parabolic semijets by the their closures.
\end{lemma}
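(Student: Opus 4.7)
The plan is to reduce the three statements to the test-function definition of parabolic semijets and apply the chain rule to the composition $(y,t)\mapsto \varphi(\phi(y,t),t)$. For (i), I would start by unpacking the assumption: $(\tau,p,X)\in\mathcal{P}^{2,+}[\Psi(u(\cdot,\cdot),\cdot)](y_0,t_0)$ means there exists a $C^{2,1}$ test function $\phi$, with $(\phi_t,D\phi,D^2\phi)(y_0,t_0)=(\tau,p,X)$, such that $v(y,t):=\Psi(u(y,t),t)$ satisfies $v-\phi\le 0$ in a neighborhood of $(y_0,t_0)$ with equality at the point. The natural candidate to test $u$ against is $\tilde\phi(y,t):=\varphi(\phi(y,t),t)$; since $\varphi(\cdot,t)$ is nondecreasing for each $t$, applying it pointwise preserves $v\le\phi$, yielding $u=\varphi(v,t)\le\varphi(\phi,t)=\tilde\phi$ near $(y_0,t_0)$ with equality at that point, so $u-\tilde\phi$ attains a local maximum at $(y_0,t_0)$.

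The next step is to read off the jet of $\tilde\phi$ at $(y_0,t_0)$. Writing $|_0$ for evaluation at $(\phi(y_0,t_0),t_0)=(v(y_0,t_0),t_0)=(\Psi(u(y_0,t_0),t_0),t_0)$, the ordinary chain rule in $t$ together with the intrinsic Hessian chain rule on $M$ gives
\[
\tilde\phi_t=\varphi_t\big|_0+\varphi'\big|_0\,\tau,\qquad D\tilde\phi=\varphi'\big|_0\,p,\qquad D^2\tilde\phi=\varphi''\big|_0\,p\otimes p+\varphi'\big|_0\,X,
\]
which is exactly the tuple claimed to lie in $\mathcal{P}^{2,+}u(y_0,t_0)$. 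Part (ii) I would handle by the same argument with the inequality reversed: the monotonicity $\varphi'\ge 0$ again sends a local minimum of $v-\phi$ to a local minimum of $u-\tilde\phi$, and the chain rule produces the same formulas.

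For (iii), the plan is to pass to the limit along approximating sequences. Given $(y_j,t_j,v(y_j,t_j),\tau_j,p_j,X_j)\to(y_0,t_0,v(y_0,t_0),\tau,p,X)$ with $(\tau_j,p_j,X_j)\in\mathcal{P}^{2,+}v(y_j,t_j)$, part (i) applied at each $j$ produces the corresponding element of $\mathcal{P}^{2,+}u(y_j,t_j)$. Since $\varphi,\varphi_t,\varphi',\varphi''$ are continuous and $v_j:=v(y_j,t_j)\to v(y_0,t_0)$, every coefficient in the chain-rule formulas converges, placing the limiting tuple in $\overline{\mathcal{P}}^{2,+}u(y_0,t_0)$; the closure of $\mathcal{P}^{2,-}$ is treated identically from (ii).

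The step requiring the most care is mostly bookkeeping: one must track which argument of $\varphi$ each derivative acts on, and confirm that the Hessian chain rule $\nabla^2(\varphi\circ\phi)=\varphi''\,d\phi\otimes d\phi+\varphi'\,\nabla^2\phi$ remains valid on the manifold, which it does because $\varphi$ acts only on the scalar value of $\phi$. There is no delicate analytic content; the entire lemma is driven by the monotonicity $\varphi'\ge 0$ together with continuity of the derivatives of $\varphi$.
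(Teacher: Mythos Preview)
Your proposal is correct and follows essentially the same route as the paper: both arguments take a test function $h$ realizing the superjet of $\Psi(u,\cdot)$, compose with the nondecreasing $\varphi(\cdot,t)$ to obtain a test function $\varphi(h(\cdot,\cdot),\cdot)$ touching $u$ from above (respectively below), read off the jet via the chain rule, and then handle the closures by approximation. Your write-up is slightly more explicit about the chain-rule computation and the limiting argument for (iii), but the underlying idea is identical.
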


\begin{proof}
The lemma is an easy consequence of the following characterization of the semijets.
\begin{align*}
\mathcal{P}^{2,+}u(y_0,t_0)&=\{\left(\vp_t(y_0, t_0), D\vp (y_0, t_0), D^2 \vp(y_0, t_0)\right) | \\
& \vp \in C^{2,1} \mbox{ and } u-\vp \mbox{ has a local maximum at } (y_0,t_0) \}
\end{align*}
\begin{align*}
\mathcal{P}^{2,-}u(y_0,t_0)&=\{\left(\vp_t(y_0, t_0), D\vp (y_0, t_0), D^2 \vp(y_0, t_0)\right) | \\
& \vp \in C^{2,1} \mbox{ and } u-\vp \mbox{ has a local mimimum at } (y_0,t_0) \}
\end{align*}
For (i), Suppose $(\tau, p, X) \in \mathcal{P}^{2,+}\Psi(u(y_0,t_0),t_0)$. Let $h$ be a $C^{2,1}$ function such that $\Psi(u(y,t),t)-h(y,t)$ has a local max at $(y_0,t_0)$ and $(h_t, Dh, D^2h)(y_0,t_0)=(\tau , p, X)$.
Since $\vp$ is increasing, we have $u(y,t)-\vp(h(y,t),t)=\vp (\Psi(u(y,t),t), t)-\vp(h(y,t),t)$ has a local max at $(y_0,t_0)$
Then it follows that $$(\vp_t+\vp^{\prime}\tau, \vp^{\prime}p, \vp^{\prime \prime}p \otimes p+ \vp^{\prime} X) \in \mathcal{P}^{2,+}u(y_0,t_0).$$

 For (ii), Suppose $(\tau, p, X) \in \mathcal{P}^{2,-}\Psi(u(y_0,t_0),t_0)$. Let $h$ be a $C^{2,1}$ function such that $\Psi(u(y,t),t)-h(y,t)$ has a local min at $(y_0,t_0)$ and $(h_t, Dh, D^2h)(y_0,t_0)=(\tau , p, X)$.
 Since $\vp$ is increasing, we have $u(y,t)-\vp(h(y,t),t)=\vp (\Psi(u(y,t),t), t)-\vp(h(y,t),t)$ has a local min at $(y_0,t_0)$
 Then it follows that $$(\vp_t+\vp^{\prime}\tau, \vp^{\prime}p, \vp^{\prime \prime}p \otimes p+ \vp^{\prime} X) \in \mathcal{P}^{2,-}u(y_0,t_0).$$

 (iii) then follows from approximation.
\end{proof}
\begin{proof}[Proof of Theorem \ref{thmh}]
The theorem is valid if we show that for any $\e>0$,
\begin{equation}
Z^{\e}(x,y,t):=\Psi(u(y,t),t)-\Psi(u(x,t),t)-d(x,y)-\frac{\e}{T-t}\le 0.\label{Ze}
\end{equation}
To prove inequality (\ref{Ze}), it suffices to show $Z^{\e}$ can not attain the maximum in $M\times M\times(0, T)$.
Assume by contradiction that there exist  $t_0 \in (0,T)$, $x_0$ and $y_0$ in $M$ at which the function
$Z^{\e}$
attains its maximum. Take $\rho$ defined as before.   Then the function
$$\Psi(u(y,t),t)-\Psi(u(x,t),t)-\rho(x,y)-\frac{\e}{T-t}$$ has a local maximum at $(x_0,y_0,t_0)$.
If $\e >0$, then we necessarily have $x_0 \neq y_0$.
By the parabolic maximum principle for semicontinuous functions on manifolds,
for any $\lambda >0$, there exist $X, Y$ satisfying

  \begin{equation*}
   (b_1, D_y \rho(x_0, y_0), X) \in \overline{\mathcal{P}}^{2,+} \Psi(u(y_0,t_0), t_0),
  \end{equation*}
 \begin{equation*}
 (-b_2, - D_x \rho(x_0, y_0), Y) \in \overline{\mathcal{P}}^{2,-} \Psi(u(x_0,t_0), t_0),
 \end{equation*}
 \begin{equation*}
 b_1+b_2=\frac{\e}{(T-t_0)^2},
 \end{equation*}
\begin{equation}\label{Hessian inequality for quasilinear}
  -\left(\lambda^{-1}+\left\|M\right\| \right)I \leq
    \begin{pmatrix}
   X & 0 \\
   0 & -Y
   \end{pmatrix}
   \leq M+\lambda M^2,
  \end{equation}
 where $M=D^2 \rho(x_0,y_0)$.
By Lemma \ref{Lemma}, we have
\begin{equation*}
   (b_1 \vp^{\prime}(z_{y_0}, t_0)+\vp_t(z_{y_0}, t_0), \vp^{\prime}(z_{y_0}, t_0) D_y \rho(s_0, t_0), \vp^{\prime}(z_{y_0}, t_0)X+\vp''(z_{y_0}, t_0)e_n(1)\otimes e_n(1)) \in \overline{\mathcal{P}}^{2,+} u(y_0,t_0),
  \end{equation*}
  and
 \begin{equation*}
 (-b_2 \vp^{\prime}(z_{x_0}, t_0) +\vp_t(z_{x_0}, t_0), -\vp^{\prime} (z_{x_0}, t_0) D_x \rho(s_0, t_0), \vp^{\prime}(z_{x_0}, t_0) Y+\vp''(z_{x_0}, t_0)e_n(0)\otimes e_n(0)) \in \overline{\mathcal{P}}^{2,-} u(x_0,t_0), \label{3.3}
 \end{equation*}
where
$z_{x_0}=\Psi(u(x_0, t_0), t_0)$, $z_{y_0}=\Psi(u(y_0, t_0), t_0)$ and $e_n(s)$ is as defined in Section 2.

Since $u$ is both a subsolution and a supersolution of (\ref{c1}), we have
$$
b_1 \vp^{\prime}(z_{y_0}, t_0)+\vp_t(z_{y_0}, t_0)\le \operatorname{tr}\left(\vp^{\prime}(z_{y_0}, t_0) A_1 X+
\vp''(z_{y_0}, t_0)A_1e_n(1)\otimes e_n(1)\right),
$$
and
$$
-b_2 \vp^{\prime}(z_{x_0}, t_0) +\vp_t (z_{x_0}, t_0)\ge \operatorname{tr}( \vp^{\prime}(z_{x_0}, t_0)A_2Y+\vp''(z_{x_0}, t_0)A_2 e_n(0)\otimes e_n(0)),
$$
where $$
A_1=\left(
    \begin{array}{cccc}
      \beta( t_0) & \cdots & 0 & 0 \\
     \vdots & \vdots & \vdots & \vdots \\
     0 & \cdots &\beta(t_0) & 0 \\
      0 & \cdots & 0 & \alpha(|\vp^{\prime}(z_{y_0}, t_0)|,\vp(z_{y_0}, t_0), t_0) \\
    \end{array}
  \right),
$$
and
$$
A_2=\left(
    \begin{array}{cccc}
      \beta( t_0) & \cdots & 0 & 0 \\
     \vdots & \vdots & \vdots & \vdots \\
     0 & \cdots &\beta(t_0) & 0 \\
      0 & \cdots & 0 & \alpha(|\vp^{\prime}(z_{x_0}, t_0)|,\vp(z_{x_0}, t_0), t_0) \\
    \end{array}
  \right).
$$
Set
$$C=\left(
    \begin{array}{cccc}
      \beta(t_0) & \cdots & 0 & 0 \\
     \vdots & \vdots & \vdots & \vdots \\
     0 & \cdots &\beta(t_0) & 0 \\
      0 & \cdots & 0 & 0 \\
    \end{array}
  \right),$$
and simple calculation shows $\left(
                           \begin{array}{cc}
                             A_1 & C \\
                            C & A_2 \\
                           \end{array}
                         \right)\ge 0$.
Then we obtain  that
\begin{eqnarray*}
\frac{\e}{(T-t_0)^2} &=&b_1+b_2 \le \frac{\operatorname{tr}(\vp^{\prime}(z_{y_0}, t_0)A_1X)-
\vp_t(z_{y_0}, t_0)}{\vp^{\prime}(z_{y_0}, t_0)}+\frac{\operatorname{tr}(A_1 e_n(1)\otimes e_n(1))
\vp''(z_{y_0}, t_0)}{\vp^{\prime}(z_{y_0}, t_0)}\\
&&+\frac{\operatorname{tr}(-\vp^{\prime}(z_{x_0}, t_0)A_2 Y)+
\vp_t(z_{x_0}, t_0)}{\vp^{\prime}(z_{x_0}, t_0)}-\frac{\operatorname{tr}(A_2 e_n(0)\otimes e_n(0))
\vp''(z_{x_0}, t_0)}{\vp^{\prime}(z_{x_0}, t_0)}\\
&&+\lambda\operatorname{tr}\left[\left(
                                                        \begin{array}{cc}
                                                      A_1 & C \\
                                                         C &  A_2 \\
                                                        \end{array}
                                                      \right)M^2
\right]\\
&\le& \operatorname{tr}\left[\left(
                                                        \begin{array}{cc}
                                                          A_1& C \\
                                                         C &  A_2 \\
                                                        \end{array}
                                                      \right)
\left(
\begin{array}{cc}
 X & 0 \\
  0 & -Y \\
   \end{array}
   \right)
\right]
+\lambda\operatorname{tr}\left[\left(
                                                        \begin{array}{cc}
                                                      A_1 & C \\
                                                         C &  A_2 \\
                                                        \end{array}
                                                      \right)M^2
\right] \\
&&+\frac{\vp_t(z_{x_0}, t_0)-\alpha(\vp'(z_{x_0},t_0),\vp(z_{x_0}, t_0), t_0)\vp''(z_{x_0},t_0)}{\vp^{\prime}(z_{x_0}, t_0)}\\
&&-\frac{\vp_t(z_{y_0}, t_0)-\alpha(\vp'(z_{y_0},t_0),\vp(z_{y_0}, t_0),t_0)\vp''(z_{y_0},t_0)}{\vp^{\prime}(z_{y_0}, t_0)}\\
&=&\operatorname{tr}\left[\left(
                                                        \begin{array}{cc}
                                                       A_1& C \\
                                                         C &  A_2 \\
                                                        \end{array}
                                                      \right)M
\right]
+\lambda\operatorname{tr}\left[\left(
                                                        \begin{array}{cc}
                                                      A_1 & C \\
                                                         C &  A_2 \\
                                                        \end{array}
                                                      \right)M^2
\right].
\end{eqnarray*}
Using (\ref{2.8}) with $\kappa=0$, we obtain
\begin{eqnarray*}
\operatorname{tr}\left[\left(
                                                        \begin{array}{cc}
                                                        A_1& C \\
                                                         C &  A_2\\
                                                        \end{array}
                                                      \right)M
\right]\leq 0.
\end{eqnarray*}

We have arrived at $ \e \leq 0$ by letting $\lambda \to 0$, which is a contradiction. Therefore (\ref{Ze}) is true, hence completing the proof.
\end{proof}

\section{Estimate on Bakry-Emery Manifolds}
We prove a generalization to viscosity solutions of Theorem 1.2 in \cite{AN}.
\begin{thm}
Let $M$ be a closed Riemannian manifold satisfying
$$Ric_{ij}+f_{ij} \geq a g_{ij}$$
for some $a\in \mathbb{R}$.
Denote $D = diam(M)$. Let $u$ be a viscosity solution of
$$u_t = \Delta_f u$$
with the operator $\Delta_f:=\Delta -\langle \nabla (\cdot), \nabla f \rangle$.
Then the modulus of continuity $\omega :[0,\frac{D}{2}] \times \R_{+}\to \R$ of $u$ is a viscosity subsolution of
$$\omega_t = \omega''-a s \omega',$$
provided $\omega$ is increasing in $s$.
\end{thm}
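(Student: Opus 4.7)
The plan is to follow the scheme of the proof of Theorem \ref{th1}, specialized to $\alpha=\beta=1$, while carrying the drift contribution $-\langle\nabla u,\nabla f\rangle$ through the argument. I would fix $(s_0,t_0)$ and a smooth $\phi$ lying above $w$ with equality at $(s_0,t_0)$, use compactness of $M$ to produce $x_0,y_0$ with $d(x_0,y_0)=2s_0$ and $u(y_0,t_0)-u(x_0,t_0)=2\phi(s_0,t_0)$, and replace $d$ by the smooth upper bound $\rho(x,y)$ from Section 3. Since here no curvature comparison function is built into the target equation, the auxiliary variation vector fields are taken to be the \emph{parallel} fields $V_i(s)=e_i(s)$ along $\gamma_0$ (corresponding to $c_\kappa\equiv 1$). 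The parabolic maximum principle for semicontinuous functions then yields, for each $\lambda>0$, scalars $b_1,b_2$ with $b_1+b_2=2\phi_t(s_0,t_0)$ and symmetric tensors $X,Y$ in the respective closures of $\mathcal{P}^{2,+}u(y_0,t_0)$ and $\mathcal{P}^{2,-}u(x_0,t_0)$, satisfying $\operatorname{diag}(X,-Y)\le M+\lambda M^2$ with $M=D^2\psi(x_0,y_0,t_0)$.

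Plugging the equation $u_t=\Delta u-\langle\nabla u,\nabla f\rangle$ into the viscosity sub/supersolution inequalities at $y_0$ and $x_0$, and using $D_y\psi=\phi'(s_0,t_0)\hat e_n(1)$, $-D_x\psi=\phi'(s_0,t_0)\hat e_n(0)$ from \eqref{2.3}--\eqref{2.4} (with $\hat e_n(s)$ the unit tangent to $\gamma_0$), I get
\begin{equation*}
b_1\le \operatorname{tr}(X)-\phi'(s_0,t_0)\langle\nabla f(y_0),\hat e_n(1)\rangle,\qquad -b_2\ge \operatorname{tr}(Y)-\phi'(s_0,t_0)\langle\nabla f(x_0),\hat e_n(0)\rangle.
\end{equation*}
Adding these, letting $\lambda\to 0$, and redoing the trace computation of \eqref{2.6}--\eqref{2.9} but \emph{retaining} the Ricci integral rather than bounding it via $\kappa$ (with parallel $V_i$'s the second-variation integrand is just $-\langle R(\hat e_n,e_i)\hat e_n,e_i\rangle$, and summing in $i$ produces $\Ric(\hat e_n,\hat e_n)$ along $\gamma_0$), I obtain
\begin{equation*}
2\phi_t(s_0,t_0)\le 2\phi''(s_0,t_0)-\phi'(s_0,t_0)\!\int_{-s_0}^{s_0}\!\Ric(\hat e_n,\hat e_n)\,d\sigma-\phi'(s_0,t_0)\bigl[\langle\nabla f(y_0),\hat e_n(1)\rangle-\langle\nabla f(x_0),\hat e_n(0)\rangle\bigr].
\end{equation*}

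To conclude, observe that the bracketed drift difference is an endpoint term: along the arclength-parametrized minimizing geodesic $\gamma(\sigma)$ from $x_0$ to $y_0$, $\tfrac{d}{d\sigma}\langle\nabla f(\gamma),\gamma'\rangle=\operatorname{Hess}f(\gamma',\gamma')$, so the fundamental theorem of calculus converts it to $\int_{-s_0}^{s_0}\operatorname{Hess}f(\hat e_n,\hat e_n)\,d\sigma$. Combining with the Ricci integral and invoking the Bakry-Emery bound $\Ric+\operatorname{Hess}f\ge a\,g$ then gives $2\phi_t(s_0,t_0)\le 2\phi''(s_0,t_0)-2as_0\phi'(s_0,t_0)$, which is the required viscosity inequality. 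The main obstacle is conceptual rather than computational: the semijets supply the values of $\langle\nabla u,\nabla f\rangle$ at distinct points $x_0\ne y_0$, and it is precisely the alignment of both gradients with the geodesic unit tangent (forced by \eqref{2.3}--\eqref{2.4}) that lets the two-point drift difference be repackaged as a single geodesic integral of $\operatorname{Hess}f$, at which point the pointwise Bakry-Emery bound can be invoked.
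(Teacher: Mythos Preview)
Your proposal is correct and follows essentially the same route as the paper's proof: the paper likewise specializes the Section~3 argument with parallel variation fields ($c_\kappa\equiv 1$), obtains $\operatorname{tr}(M)\le 2\phi''-\phi'\int_{-s_0}^{s_0}\Ric(e_n,e_n)\,d\sigma$, and then uses the Bakry--Emery bound together with the fundamental theorem of calculus to convert $\int\operatorname{Hess}f(e_n,e_n)\,d\sigma$ into the endpoint terms $\langle\nabla f(y_0),e_n(1)\rangle-\langle\nabla f(x_0),e_n(0)\rangle$, which cancel against the drift contributions. The only cosmetic difference is the order in which the Bakry--Emery inequality and the fundamental theorem of calculus are invoked.
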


\begin{proof}
The idea is the same as before. Let $\phi$ be a smooth function lying above $w$ for $U\times (t_0-\epsilon_0, t_0+\e_0)$ with equality at $(s_0, t_0)$.
Then it follows that the function
$$Z(y, x, t):=u(y,t)-u(x, t)-\psi(x,y,t)$$
has a local maximum at $(x_0,y_0,t_0)$, where $\psi(x,y,t)=\phi\left(\frac{\rho(x,y)}{2},t\right)$ as before.
Now we can apply the parabolic version maximum principle for semicontinuous functions on manifolds to conclude that
for each $\lambda >0$, there exist symmetric tensors $X, Y$ such that
\begin{equation*}
   (b_1, D_y \psi (x_0, y_0, t_0), X) \in \overline{\mathcal{P}}^{2,+} u(y_0,t_0),
  \end{equation*}
 \begin{equation*}
 (-b_2, - D_x \psi (x_0, y_0, t_0), Y) \in \overline{\mathcal{P}}^{2,-} u(x_0,t_0),
 \end{equation*}
 \begin{equation*}
 b_1+b_2= \psi_t (x_0, y_0, t_0)=2 \phi_t(s_0,t_0),
 \end{equation*}
  \begin{equation}
    \begin{pmatrix}
   X & 0 \\
   0 & -Y
   \end{pmatrix}
   \leq M+\lambda M^2,
  \end{equation}
 where $M=D^2 \psi(x_0,y_0,t_0)$.

The first derivative of $\psi$ yields
\begin{equation}
D_y \psi (x_0, y_0, t_0)=\phi'(s_0, t_0)\frac{\gamma'_0(1)}{2s_0}=\phi'(s_0, t_0) e_n(1), \label{2.3}
\end{equation}
and
\begin{equation}
D_x \psi (x_0, y_0, t_0)=-\phi'(s_0, t_0)\frac{\gamma'_0(0)}{2s_0}=\phi'(s_0, t_0) e_n(0). \label{2.4}
\end{equation}
Since $u$ is a viscosity solution, we have
$$b_1 \leq \tr(X)-\phi'\langle\nabla f(y_0), e_n(1)\rangle,$$
$$-b_2 \geq \tr(Y)-\phi'\langle\nabla f(x_0), e_n(0)\rangle$$
Therefore
\begin{align*}
2\phi_t(s_0,t_0)=b_1 +b_2 & \leq \tr(X)-\tr(Y) -\phi'\langle\nabla f(y_0), e_n(1)\rangle +\phi'\langle\nabla f(x_0), e_n(0)\rangle \\
& \leq \tr(M)  -\phi'\langle\nabla f(y_0), e_n(1)\rangle +\phi'\langle\nabla f(x_0), e_n(0)\rangle.
\end{align*}
We estimate that
\begin{align*}
\tr(M) &= \sum_{i=1}^{n-1} D^2\phi\left((e_i(1),e_i(0)), (e_i(1),e_i(0)) \right) +D^2\phi\left((e_n(1),-e_n(0)), (e_n(1),-e_n(0))\right)\\
& \leq \phi''-\phi'\int_{-s_0}^{s_0} \Ric(e_n(s),e_n(s)) ds \\
&\leq \phi'' +\phi' \int_{-s_0}^{s_0}  \nabla \nabla f(e_n,e_n) -a  g\langle e_n,e_n \rangle ds \\
& = \phi''+2 a s_0 \phi' + \phi'\langle\nabla f(y_0), e_n(1)\rangle +\phi'\langle\nabla f(x_0), e_n(0)\rangle,
\end{align*}
where we have used \eqref{3.7} with $\kappa=0$.
Hence at $(s_0,t_0)$, $$\phi_t \leq \phi''- a s_0 \phi' $$
holds, proving the theorem.
\end{proof}

Next we give the evolutionary analogue of the above theorem, which is a generalization of Theorem 5 in \cite{A}.
\begin{thm}
Let $M^n$ be a closed manifold with time-dependent metrics and smooth function $f(\cdot,t)$. Suppose that
$$g_t \geq -2\left(Ric_{ij}+f_{ij} \right)+ 2 a g_{ij}.$$
Let $u$ be a viscosity solution of the drift-Laplacian heat flow
$$u_t= \Delta_f u.$$
Then the modulus of continuity $\omega :[0,\frac{D}{2}] \times \R_{+}\to \R$ of $u$ is a viscosity subsolution of
$$\omega_t = \omega''-a s \omega'$$
provided $\omega$ is increasing in $s$.
\end{thm}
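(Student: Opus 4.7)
The strategy combines the viscosity doubling argument of the static Bakry--Emery theorem immediately above with the distance-evolution trick from the Ricci flow version, Theorem \ref{thr}. As in the previous results it suffices to fix $(s_0, t_0)$, take a smooth $\phi$ lying above $w$ on a neighborhood of $(s_0, t_0)$ with equality at that point, and verify that $\phi_t \leq \phi'' - a s_0 \phi'$ at $(s_0, t_0)$. Because $w$ is increasing in $s$ we have $\phi'(s_0, t_0) \geq 0$, and this nonnegativity will be crucial for the sign manipulations below. Pick $x_0 \neq y_0$ with $d_{g(t_0)}(x_0, y_0) = 2 s_0$ realizing the modulus, and let $\gamma_0$ be the $g(t_0)$-minimizing geodesic, equipped with the Fermi frame $\{e_i(s)\}$ along $\gamma_0$.

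Define $\rho(x, y, t)$ to be the $g(t)$-length of the smooth interpolating curve built via those Fermi coordinates (exactly as in the earlier proofs), so that $\rho(x, y, t) \geq d_{g(t)}(x, y)$ everywhere with equality at $(x_0, y_0, t_0)$. Then $Z(x, y, t) = u(y, t) - u(x, t) - 2\phi(\rho(x, y, t)/2, t)$ attains a local maximum at $(x_0, y_0, t_0)$. The parabolic maximum principle for semicontinuous functions on manifolds yields $b_1, b_2, X, Y$ with $(b_1, D_y\psi, X) \in \overline{\mathcal{P}}^{2,+} u(y_0, t_0)$, $(-b_2, -D_x\psi, Y) \in \overline{\mathcal{P}}^{2,-} u(x_0, t_0)$, $b_1 + b_2 = \psi_t$, and the usual matrix inequality on $\operatorname{diag}(X,-Y)$, where $\psi(x, y, t) = 2\phi(\rho(x, y, t)/2, t)$. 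In contrast to the static case, $\psi_t = 2\phi_t + \phi' \rho_t$, and differentiating $\int_0^1 |\gamma_0'|_{g(t)} \, ds$ at $t = t_0$ gives
\[
\rho_t(x_0, y_0, t_0) = \tfrac{1}{2}\int_{-s_0}^{s_0} g_t(e_n, e_n) \, dr.
\]

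Since $u$ is a viscosity solution of $u_t = \Delta_f u$, the sub/super-solution conditions produce $b_1 \leq \tr X - \phi' \langle \nabla f(y_0, t_0), e_n(1)\rangle$ and $-b_2 \geq \tr Y - \phi' \langle \nabla f(x_0, t_0), e_n(0)\rangle$. Adding these, sending $\lambda \to 0$, and estimating $\tr X - \tr Y$ by the second-variation computation from the proof of Theorem \ref{thr} (with $V_i = e_i$, i.e.\ $\kappa = 0$), one bounds
\[
b_1 + b_2 \leq 2\phi'' - \phi' \int_{-s_0}^{s_0} \Ric(e_n, e_n)\, dr - \phi'\langle \nabla f(y_0), e_n(1)\rangle + \phi'\langle \nabla f(x_0), e_n(0)\rangle.
\]
Meanwhile, the hypothesis $g_t \geq -2(\Ric + \H f) + 2 a g$ together with $\phi' \geq 0$ gives the key lower estimate
\[
\phi'\rho_t \geq -\phi'\int_{-s_0}^{s_0}\bigl[\Ric + \H f\bigr](e_n, e_n)\, dr + 2 a s_0 \phi'.
\]
Substituting into $2\phi_t = b_1 + b_2 - \phi'\rho_t$ and invoking the fundamental theorem of calculus $\int_{-s_0}^{s_0}\nabla\nabla f(e_n, e_n)\,dr = \langle \nabla f(y_0), e_n(1)\rangle - \langle \nabla f(x_0), e_n(0)\rangle$, both the $\Ric$ integral and the $\H f$ contributions cancel exactly, leaving $2\phi_t \leq 2\phi'' - 2 a s_0 \phi'$ as required.

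The principal difficulty is the three-way bookkeeping between the $\Ric$ term generated by second variation, the drift boundary terms produced by the viscosity conditions at the two base points, and the integrated $\Ric + \H f$ appearing in $\rho_t$. The $\H f$ piece emerging from the metric-evolution hypothesis must be integrated by parts to match precisely the endpoint drift contributions, and the inequality on $-\phi'\rho_t$ depends essentially on $\phi' \geq 0$. Once this cancellation is isolated, the remaining estimates reduce to the variational calculus already carried out in Theorem \ref{thr} and in the preceding static Bakry--Emery theorem.
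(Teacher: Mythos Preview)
Your argument is correct and follows essentially the same route as the paper's own proof: both combine the time-dependent $\psi_t = 2\phi_t + \phi'\rho_t$ (with $\rho_t$ controlled from below via the metric-evolution hypothesis) together with the viscosity sub/supersolution inequalities for $\Delta_f u$ and the $\kappa=0$ second-variation bound on $\tr(M)$, and then let the $\Ric$ integral and the $\nabla^2 f$ term (integrated by parts to the endpoint drift contributions) cancel. Your write-up is in fact more explicit than the paper's three-line sketch, particularly in flagging the role of $\phi'\geq 0$ when passing from the lower bound on $\rho_t$ to an upper bound on $-\phi'\rho_t$, and in identifying the FTC step that matches $\int \nabla^2 f(e_n,e_n)$ with the boundary drift terms.
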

\begin{proof}
The proof is immediate by combining
$$\phi_t -\int_{-s_0}^{s_0} \left(\Ric(e_n,e_n)+\nabla^2 f(e_n,e_n) -ag\langle e_n,e_n \rangle \right)\ ds  \leq b_1+b_2,$$
$$b_1 +b_2 \leq \tr(M)  -\phi'\langle\nabla f(y_0), e_n(1)\rangle +\phi'\langle\nabla f(x_0), e_n(0)\rangle, $$
and
$$\tr (M)\leq \phi''-\phi'\int_{-s_0}^{s_0} \Ric(e_n(s),e_n(s))\ ds.$$
\end{proof}
\begin{remark}
The same proof works for the more general equation: If $u$ is a viscosity solutions of
$$u_t = \left[\alpha(|Du|,t) \frac{D_iu D_ju}{|Du|^2}+\beta(|Du|,t)\left(\delta_{ij}-\frac{D_iu D_ju}{|Du|^2}\right)\right]D_iD_ju +b(|Du|, t)+\langle \nabla f, \nabla u \rangle.$$
Then the modulus of continuity $\omega$ of $u$ satisfies
 $$\omega_t \leq \alpha(\omega ') \omega''-a s \omega'$$
 in the viscosity sense, provided $\omega$ is increasing in $s$.
\end{remark}

\section{Boundary Value Problems}

\subsection{Definition of Viscosity Solution for Boundary Value Problems}
\mbox{  } \\
We recall the definition of viscosity solutions to boundary value problems from \cite[Section 7]{CIL}.
Let $\Omega$ be an open subset of $\R^n$ and $T>0$. For brevity we write $z=(x,t)$. 
Consider the boundary problem of the form
\begin{eqnarray}\label{BVP}
\mbox{     }u_t+F(z,u,Du,D^2u)=0 \mbox{  in  } \Omega \times (0,T), \mbox{       }
B(z,u,Du,D^2u) =0 \mbox{  on   } \p \Omega \times (0,T).
\end{eqnarray}
Assume $F\in C(\ol{\Omega} \times \R \times \R \times \R^n \times S^{n\times n})$ and $B \in C(\p \Omega \times \R \times \R \times \R^n \times S^{n\times n})$
are both proper.

\begin{definition}
A function $u\in USC(\ol{\Omega}\times (0,T))$ is a viscosity subsolution of  \eqref{BVP} if
$$\tau+F(z,u(z),p,X) \leq 0 \mbox{   for   }  z \in \Omega \times(0,T),
(\tau, p ,X ) \in  \overline{\mathcal{P}}^{2,+}_{\ol{\Omega}\times (0,T)} u(z),$$
and
$$\min \left\{\tau+F(z,u(z),p,X), B(z, u(z),p,X) \right\} \leq 0
\mbox{   for   }  z \in \p \Omega \times(0,T), (\tau, p ,X ) \in  \overline{\mathcal{P}}^{2,+}_{\ol{\Omega}\times (0,T)} u(z).$$

Similarly, $u\in LSC(\ol{\Omega}\times (0,T))$ is a viscosity supersolution of  \eqref{BVP} if
$$\tau+F(z,u(z),p,X) \geq 0 \mbox{   for   }  z \in \Omega \times(0,T),
(\tau, p ,X ) \in  \overline{\mathcal{P}}^{2,-}_{\ol{\Omega}\times (0,T)} u(z),$$
and
$$\max \left\{\tau+F(z,u(z),p,X), B(z,u(z),p,X) \right\} \geq 0
\mbox{   for   }  z \in \p \Omega \times(0,T), (\tau, p ,X ) \in  \overline{\mathcal{P}}^{2,-}_{\ol{\Omega}\times (0,T)} u(z).$$

Finally, $u$ is a viscosity solution of \eqref{BVP} if it is both a viscosity subsolution and a viscosity supersolution of \eqref{BVP}.
\end{definition}

\subsection{Neumann problem}

We consider the following quasilinear evolution equations:
\begin{align*}
\frac{\p u}{\p t}=a^{ij}(Du,t) D_iD_j u + b(Du, t)\mbox{  in   } \Omega \times (0,T),  \\
\langle Du(x,t), n(x) \rangle=0 \mbox{  on  } \p \Omega\times (0,T),
\end{align*}
where $A(p,t)=\left(a^{ij}(p, t)\right)$ is positive semi-definite and $n(x)$ is the exterior unit normal vector at $x$.
As in \cite{AC2}, we assume that there exists a continuous function
$\alpha: \R_{+}\times [0, T] \to \R$ with
\begin{equation}
0 < \alpha(R, t) \leq R^2 \inf_{|p|=R, (v \cdot p) \neq 0} \frac{v^{T}A(p, t)v}{(v \cdot p)^2},
\end{equation}

The following theorem is a generalization of Theorem 4.1 in \cite{AC2} to viscosity solutions.
\begin{thm}
Let $\Omega \subset \RR^n$ be a smooth bounded and convex domain.
Let $u$ be a viscosity solution of the Neumann problem.
Then the modulus of continuity $\omega$ is a viscosity subsolution of the one dimensional equation
$\omega_t = \alpha(|\omega^{\prime}|, t) \omega^{\prime \prime}$, provided that $\omega$ is increasing.
\end{thm}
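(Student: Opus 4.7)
The plan is to adapt the proof of Theorem 3.1 to the Euclidean boundary setting, exploiting convexity of $\Omega$ to replace the modified length $\rho$ by the ordinary distance $|x-y|$ (which is smooth away from the diagonal and whose straight-line geodesics automatically stay in $\overline{\Omega}$) and to neutralize the Neumann boundary condition via a support-plane argument.

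First I would fix $(s_0,t_0)$ and a smooth $\phi$ lying above $\omega$ on a neighborhood $U\times(t_0-\varepsilon_0,t_0+\varepsilon_0)$ with equality at $(s_0,t_0)$. Compactness of $\overline{\Omega}$ yields $x_0,y_0\in\overline{\Omega}$ with $|x_0-y_0|=2s_0$ and $u(y_0,t_0)-u(x_0,t_0)=2\phi(s_0,t_0)$. Setting $\psi(x,y,t):=2\phi(|x-y|/2,t)$, the function $Z(x,y,t):=u(y,t)-u(x,t)-\psi(x,y,t)$ attains a local maximum at $(x_0,y_0,t_0)$ on $\overline{\Omega}\times\overline{\Omega}\times(0,T)$. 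Applying the parabolic maximum principle for semicontinuous functions (the boundary-value version from \cite[Theorem 8.3]{CIL}) for each $\lambda>0$ produces semijets $(b_1,D_y\psi,X)\in\overline{\mathcal{P}}^{2,+}_{\overline{\Omega}\times(0,T)}u(y_0,t_0)$ and $(-b_2,-D_x\psi,Y)\in\overline{\mathcal{P}}^{2,-}_{\overline{\Omega}\times(0,T)}u(x_0,t_0)$ with $b_1+b_2=2\phi_t(s_0,t_0)$ and the usual Hessian comparison $\bigl(\begin{smallmatrix} X & 0 \\ 0 & -Y \end{smallmatrix}\bigr)\le M+\lambda M^2$, where $M=D^2\psi$.

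The new ingredient is handling the possibility that $x_0$ or $y_0$ lies on $\partial\Omega$. If $y_0\in\partial\Omega$, the Neumann part of the subsolution condition reads $B(y_0,D_y\psi)=\langle D_y\psi,n(y_0)\rangle=\phi'(s_0,t_0)\langle y_0-x_0,n(y_0)\rangle/(2s_0)$; convexity of $\Omega$ gives $\langle y_0-x_0,n(y_0)\rangle\ge 0$, and $\phi'\ge 0$ (since $\omega$ is increasing in $s$) yields $B\ge 0$. Symmetrically, at $x_0\in\partial\Omega$ one has $B(x_0,-D_x\psi)\le 0$. When these inequalities are strict, the min/max definitions of viscosity sub/supersolution force the PDE to hold at the respective point. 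With the PDE available at both points, the matrix computations from the proof of Theorem 3.1 (specialized to $\kappa=0$, so the Ricci contribution vanishes and only the radial mode survives) together with the assumed lower bound $\alpha(R,t)\le R^2 v^T A(p,t)v/(v\cdot p)^2$ on the radial eigenvalue, yield $2\phi_t(s_0,t_0)\le 2\alpha(|\phi'(s_0,t_0)|,t_0)\phi''(s_0,t_0)$ upon sending $\lambda\to 0^+$; the off-radial ($\beta$-weighted) Hessian terms are nonpositive in the Euclidean/flat case, as in \eqref{2.8} with $\kappa=0$.

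The main obstacle is the borderline tangential case, where $\langle y_0-x_0,n(y_0)\rangle=0$ (or the analogue at $x_0$) leaves $B=0$ and does not immediately deliver the PDE from the min/max condition. I would handle this by the standard penalty perturbation used in the proof of Theorem \ref{thmh}: replace $Z$ by $Z-\varepsilon/(T-t)$, work with the maximizer of the perturbed functional (which now satisfies strictly positive time-derivative gap and hence cannot sit on a tangential boundary configuration without contradicting the strict inequalities for $B$), and send $\varepsilon\to 0^+$. The drift term $b(|Du|,t)$ (if present) cancels between $b_1$ and $b_2$ exactly as in the proof of Theorem 3.1 since it depends only on $|Du|$ and $|D_y\psi|=|D_x\psi|=\phi'(s_0,t_0)$ at the two points. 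This produces the viscosity subsolution inequality for $\omega$.
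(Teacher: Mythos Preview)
Your overall strategy matches the paper's up to the boundary analysis, but there is a genuine gap in your treatment of the tangential case $\langle y_0-x_0,n(y_0)\rangle=0$ (or its analogue at $x_0$). The penalty $\varepsilon/(T-t)$ you propose only affects the time equation $b_1+b_2=2\phi_t+\varepsilon/(T-t_\varepsilon)^2$; it does nothing to the spatial Neumann quantity $B=\phi'\langle e_n,n\rangle$. When $B=0$ the viscosity subsolution condition $\min\{b_1-F,\,B\}\le 0$ is satisfied trivially and gives no control on $b_1$, so you cannot conclude $b_1\le F(\cdots)$ at that point. Your claim that the ``strictly positive time-derivative gap'' rules out tangential boundary configurations is unfounded: the location of the maximizer and the sign of $B$ there are unaffected by a time penalty.

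The paper resolves this by perturbing the solution itself rather than the test functional. It fixes $z_0\in\Omega$, sets $v(x)=\tfrac12|x-z_0|^2$ so that $\langle Dv,n\rangle\ge\delta>0$ on $\partial\Omega$, and works with $u_\varepsilon=u-\varepsilon v$ (a subsolution of a slightly modified equation) and $u^\varepsilon=u+\varepsilon v$ (a supersolution of another). For the approximating function $Z_\varepsilon(x,y,t)=u_\varepsilon(y,t)-u^\varepsilon(x,t)-2\phi(|y-x|/2,t)$ the boundary quantity at $y_\varepsilon\in\partial\Omega$ becomes $\phi'\langle e_n,n(y_\varepsilon)\rangle+\varepsilon\langle Dv(y_\varepsilon),n(y_\varepsilon)\rangle\ge \varepsilon\delta>0$, which is \emph{strictly} positive regardless of tangency; hence the PDE alternative is forced. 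One then passes $\varepsilon\to 0$. (The paper also first replaces $\phi$ by $\phi+(s-s_0)^4+(t-t_0)^4$ to make the maximum strict, ensuring convergence of the approximating maximizers.) This perturbation of $u$ by a function with strictly positive normal derivative is the missing idea in your argument.
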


\begin{proof}
We must show that if $\phi$ is a smooth function such that $\omega -\phi$ has a local maximum at $(s_0,t_0)$
 for $s_0>0$ and $t_0>0$, then  at $(s_0,t_0)$
 $$\phi_t \leq \alpha(|\phi^{\prime}|, t) \phi^{\prime \prime}.$$
 As before, we consider the function
 $$Z(y,x,t)=u(y,t)-u(x,t)-2\phi(\frac{|y-x|}{2},t)$$
 and arrive at that there exists $(x_0,y_0,t_0)$ with $|x_0-y_0|=2s_0$
 such that $Z$ attains a local maximum at $(x_0,y_0,t_0)$.
 Now replacing $\phi$ by
   $$\vp(s,t)=\phi(s,t)+(s-s_0)^4+(t-t_0)^4$$
   if necessary, we may assume $Z$ has a strict local maximum at $(x_0,y_0,t_0)$.

If $(x_0,y_0)\in \Omega \times \Omega$, then the same argument as in \cite{me1} would prove the theorem.
For the case $(x_0,y_0)\in \p (\Omega \times \Omega)$, the strategy  is to produce approximations $u^\e, u_\e$
 such that $u^\e \to u$, $u_\e \to u$ uniformly as $\e \to 0$
 and $u^\e, u_\e$ are a supersolution and a subsolution of some modified equation, for which we have the same inequalities no matter the maximum point lies in $\Omega\times \Omega$ or on $\p (\Omega \times \Omega)$.

Fix a point $z_0\in \Omega$. Let $\delta = d(z_0, \p \Omega)>0$.
 Define $v(x)=\frac 1 2 (x-z_0)^2$. Then $Dv(x)=x-z_0$ and $D^2v(x)=I$.
 Moreover for any $x\in \p \Omega$, $$\langle Dv(x), n(x) \rangle = \langle x-z_0, n(x) \rangle \geq d(z_0, \p \Omega) =\delta. $$
 Define $u_\e(x) = u(x)-\e v(x)$ and $u^\e(x) = u(x)+\e v(x)$. Then we have the following:\\
$u_\e$ is a viscosity subsolution of
 \begin{align}
 \frac{\p u}{\p t}=a^{ij}(Du+\e Dv,t) D_iD_j u+b(Du+\e Dv,t)+\e \operatorname{tr}(a^{ij}(Du+\e Dv,t)) \mbox{  in   } \Omega,  \\
 \langle Du(x,t), n(x) \rangle +\e \langle Dv(x), n(x) \rangle =0 \mbox{  on  } \p \Omega.
 \end{align}
 and
 $u^\e$ is a viscosity supersolution of
  \begin{align}
  \frac{\p u}{\p t}=a^{ij}(Du-\e Dv,t) D_iD_j u+b(Du-\e Dv,t)-\e \operatorname{tr}(a^{ij}(Du-\e Dv,t)) \mbox{  in   } \Omega,  \\
  \langle Du(x,t), n(x) \rangle -\e \langle Dv(x), n(x) \rangle =0 \mbox{  on  } \p \Omega.
  \end{align}

 We complete the proof by considering the following approximation of the function $Z$:
 $$Z_\e (x,y,t)=u_\e(y,t) -u^\e(x,t) -2 \phi\left(\frac{|y-x|}{2},t\right).$$
Then $Z_\e$ has a local max at $(x_\e, y_\e , t_\e)$
with  $(x_\e, y_\e , t_\e) \to (x_0,y_0,t_0)$ and $s_\e =|y_\e-x_\e|/2 \to s_0$.
As usual, choose $e_n=\frac{y_\e-x_\e}{|y_\e-x_\e|}$ and the maximum principle for semicontinuous functions \cite[Theorem 8.3]{CIL}  gives $b_{1,\e }, b_{2,\e } \in \R$ and $X_\e,Y_\e\in S^{n\times n}$
for any $\lambda >0$,
   \begin{equation*}
    (b_{1,\e }, \phi'(s_\e, t_\e)e_n, X_\e) \in \overline{\mathcal{P}}^{2,+}_{\ol{\Omega}\times (0,T)} u_\e(y_\e,t_\e),
   \end{equation*}
  \begin{equation*}
  (-b_{2,\e }, \phi'(s_\e, t_\e)e_n, Y_\e) \in \overline{\mathcal{P}}^{2,-}_{\ol{\Omega}\times (0,T)} u^\e(x_\e,t_\e),
  \end{equation*}
  \begin{equation*}
  b_{1,\e }+b_{2,\e }=2\phi_t(s_\e, t_\e),
  \end{equation*}

   \begin{equation*}
   -\left(\lambda^{-1}+\left\|M\right\| \right)I \leq
     \begin{pmatrix}
    X_\e & 0 \\
    0 & -Y_\e
    \end{pmatrix}
    \leq M+\lambda M^2,
   \end{equation*}
where $M=2D^2 \phi(s_\e,t_\e)$.
By the definition of viscosity solution for boundary problem, we have
If $y_\e \in \Omega$, then at $(x_\e ,y_\e ,t_\e )$,
\begin{equation}\label{y interior}
b_{1,\e} \leq tr (A(\phi^{\prime} e_n +\e Dv) X_\e) -b(\phi^{\prime} e_n+\e Dv)+\e \operatorname{tr}(A(Du+\e Dv)),
\end{equation}
and if $y_\e \in \p \Omega$, then at $(x_\e ,y_\e ,t_\e )$
\begin{eqnarray*}
\min & \left\{b_{1,\e }- tr (A(\phi^{\prime} e_n+\e Dv) X_\e) -b(\phi^{\prime} e_n+\e Dv)-\e \operatorname{tr}(A(Du+\e Dv)),\right. \\
 & \left.  \phi^{\prime} \langle e_n, n(y_\e)\rangle +\e \langle Dv(y_\e), n(y_\e) \rangle \right\} \leq 0,
\end{eqnarray*}
However, since $\Omega$ is convex and $\phi'\geq 0$,
$\phi^{\prime} \langle e_n, n(y_\e)\rangle +\e \langle Dv(y_\e), n(y_\e) \rangle \geq \e \delta .$
Thus equation \eqref{y interior} is valid no matter $y_\e$ lies in $\Omega$ or on $\p \Omega$.  Similarly,
If $x_\e \in \Omega$, then at $(x_\e ,y_\e ,t_\e )$
 \begin{equation}\label{x interior}
 -b_{2,\e} \geq  tr (A(\phi^{\prime} e_n-\e Dv) Y_\e) -b(\phi^{\prime} e_n -\e Dv)-\e \operatorname{tr}(A(\phi' e_n-\e Dv)),
 \end{equation}
and if $x_\e\in \p \Omega$, then at $(x_\e ,y_\e ,t_\e )$
 \begin{eqnarray*}
 \max & \{-b_{2,\e} - tr (A(\phi^{\prime} e_n-\e Dv ) Y_\e) -b(\phi^{\prime} e_n)+\e \operatorname{tr}(A(\phi'e_n-\e Dv)), \\
 & \phi' \langle e_n, n(x_\e)\rangle-\e \langle Dv(x_\e), n(x_\e) \rangle \} \geq 0.
 \end{eqnarray*}
 Observe that $\phi' \langle e_n, n(x_\e)\rangle-\e \langle Dv(x_\e), n(x_\e) \rangle \leq -\e \delta <0$,
 because $\Omega$ is convex and $\phi'\geq 0$.
 Therefore, equation \eqref{x interior}  is valid no matter $x_\e$ lies in $\Omega$ or on $\p \Omega$.
By passing to subsequences if necessary, we have
$b_{1,\e} \to b_1$, $b_{2,\e} \to b_2$, $X_\e  \to X$ and $Y_\e  \to Y$ as $\e  \to 0$. The limits satisfies
$$b_1+b_2=2\phi_t(s_0,t_0),$$
\begin{equation*}
   -\left(\lambda^{-1}+\left\|M\right\| \right)I \leq
     \begin{pmatrix}
    X & 0 \\
    0 & -Y
    \end{pmatrix}
    \leq M+\lambda M^2,
   \end{equation*}
Letting $\e \to 0$ in \eqref{y interior} and \eqref{x interior}, we obtain at $(y_0, t_0)$ $(x_0, t_0)$
$$b_1 \leq tr (A(\phi^{\prime} e_n ) X) -b(\phi^{\prime} e_n),$$
$$-b_2 \geq  tr (A(\phi^{\prime} e_n) Y) -b(\phi^{\prime} e_n),$$
The rest of the proof is the same as the proof of Theorem 1.1 in \cite{me1}.
\end{proof}

\begin{remark}
The same modulus of continuity estimate holds on manifolds for Neumann boundary problem.
The argument is the same as in Section 3.
\end{remark}

\subsection{Dirichlet Problem}
We consider the following quasilinear evolution equations:
\begin{align*}
\frac{\p u}{\p t}=a^{ij}(Du,t) D_iD_j u \mbox{  in   } \Omega\times (0,T),  \\
u(x,t)=0 \mbox{  on  } \p \Omega\times (0,T).
\end{align*}
Where $A(p,t)=\left(a^{ij}(p, t)\right)$ is positive semi-definite and $n(x)$ is the exterior unit normal vector at $x$.
As in \cite{AC2}, we assume that there exists a continuous function
$\alpha: \R_{+}\times [0, T] \to \R$ with
\begin{equation*}
0 < \alpha(R, t) \leq R^2 \inf_{|p|=R, (v \cdot p) \neq 0} \frac{v^{T}A(p, t)v}{(v \cdot p)^2},
\end{equation*}
For Dirichlet problem, we cannot formulate the same theorem as for Neumann problem.
B. Andrews' proof for regular solutions assumes concavity of the modulus of continuity
to rule out the case that maximum can occur on the boundary. Instead, we prove the following theorem,
which is a generalization of Theorem 4.2 in \cite{AC2} to viscosity solutions.

\begin{thm}
Let $\Omega \subset \RR^n$ be a smooth bounded and convex domain. Let $u$ be a continuous viscosity solution of the Dirichlet problem. Let $\vp_0$ be a modulus of continuity of $u(\cdot,0)$. Suppose $\vp$ is increasing and concave in the first variable and satisfies
$$\vp_t \geq  \alpha(|\vp^{\prime}|, t) \vp^{\prime \prime},$$
and $\vp(z,t) \geq \vp_0(z)$ for all $z\geq 0$.  Then $\vp(s,t)$ is a modulus of continuity for $u(s,t)$ for all $t>0$.
\end{thm}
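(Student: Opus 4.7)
The plan is to argue by contradiction via the doubling device. For each $\epsilon>0$, set
$$Z_\epsilon(x,y,t) := u(y,t)-u(x,t)-2\varphi(|y-x|/2,t)-\frac{\epsilon}{T-t},$$
so that the theorem reduces to showing $Z_\epsilon\le 0$ on $\overline\Omega\times\overline\Omega\times[0,T)$ for every $\epsilon>0$; the conclusion then follows upon letting $\epsilon\downarrow 0$. The hypothesis $\varphi(\cdot,0)\ge\varphi_0$, together with $\varphi_0$ being a modulus of continuity for $u(\cdot,0)$, gives $Z_\epsilon(\cdot,\cdot,0)\le -\epsilon/T<0$, while the penalty $-\epsilon/(T-t)$ precludes a maximum as $t\uparrow T$. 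Hence if $\sup Z_\epsilon=M>0$ it is attained at some $(x_0,y_0,t_0)\in\overline\Omega\times\overline\Omega\times(0,T)$.

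If $(x_0,y_0)\in\Omega\times\Omega$, I would replay the argument from Section~3 with $\kappa=0$: Theorem~\ref{max prin} produces $(b_1,p,X)\in\overline{\mathcal P}^{2,+}u(y_0,t_0)$ and $(-b_2,p,Y)\in\overline{\mathcal P}^{2,-}u(x_0,t_0)$ with $p=\varphi'(s_0,t_0)\hat e$, $b_1+b_2=2\varphi_t(s_0,t_0)+\epsilon/(T-t_0)^2$, and the standard matrix inequality on $\mathrm{diag}(X,-Y)$. Substituting into the viscosity sub/supersolution inequalities for $u$ and running the block-trace computation with $A$, $C$ exactly as in Section~3 yields
\[
\frac{\epsilon}{(T-t_0)^2}\le 2\alpha(|\varphi'(s_0,t_0)|,t_0)\varphi''(s_0,t_0) - 2\varphi_t(s_0,t_0) \le 0,
\]
where the last inequality is the supersolution hypothesis $\varphi_t\ge\alpha(|\varphi'|,t)\varphi''$; this contradicts $\epsilon>0$.

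The main obstacle is the boundary case, in which at least one of $x_0,y_0$ lies on $\partial\Omega$: the viscosity formulation of the Dirichlet condition is vacuous at a point where $u=0$, so no exploitable PDE information is available on that side. To rule this out I would invoke the concavity of $\varphi$ together with the convexity of $\Omega$, following the line of argument in Theorem~4.2 of~\cite{AC2}. Assuming without loss of generality $x_0\in\partial\Omega$, convexity lets one extend the segment $[x_0,y_0]$ to a second boundary point $y_*\in\partial\Omega$ with $|x_0-y_*|=2(s_0+s_*)$, $s_*=|y_0-y_*|/2$; since $u(x_0,t_0)=u(y_*,t_0)=0$, the inequality $Z_\epsilon(y_*,y_0,t_0)\le M$ together with monotonicity of $\varphi$ immediately yields $s_*\ge s_0$, while the tangential first-order maximality condition at $x_0$ forces $y_0-x_0$ to be normal to $\partial\Omega$. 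The concavity of $\varphi$ (which together with $\varphi(0,t)\ge 0$ gives subadditivity $\varphi(s_0+s_*,t_0)\le\varphi(s_0,t_0)+\varphi(s_*,t_0)$) then propagates these constraints to contradict $M>0$. A technically smoother alternative that I would in fact adopt is to add a penalty $-\delta\bigl[d(x,\partial\Omega)^{-1}+d(y,\partial\Omega)^{-1}\bigr]$ to $Z_\epsilon$, so that the maximum of the perturbed functional is automatically attained strictly in $\Omega\times\Omega$; the interior argument of the preceding paragraph then applies with $\delta$-corrections, and passing to the limit $\delta\downarrow 0$ closes the proof.
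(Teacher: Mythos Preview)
Your interior argument is correct and is precisely the viscosity replacement for the classical maximum-principle computation that the paper (following \cite{AC2}) invokes. The gap is in your treatment of the boundary case, and specifically in the alternative you say you would ``in fact adopt''.

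The penalty $-\delta\bigl[d(x,\partial\Omega)^{-1}+d(y,\partial\Omega)^{-1}\bigr]$ does force the maximizer $(x_\delta,y_\delta)$ into the interior, but the ``$\delta$-corrections'' that then appear in the semijets do \emph{not} vanish as $\delta\to 0$. From the standard penalization estimate you only obtain $\delta\, d(x_\delta,\partial\Omega)^{-1}\to 0$; however the gradient of the penalty contributes terms of order $\delta\, d(x_\delta,\partial\Omega)^{-2}$ and the Hessian contributes terms of order $\delta\, d(x_\delta,\partial\Omega)^{-3}$, and nothing prevents these from blowing up when the original maximizer lies on $\partial\Omega$ (in which case $d(x_\delta,\partial\Omega)\to 0$). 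Thus the limiting inequality you need does not follow, and the argument breaks down exactly at the point it is supposed to resolve.

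Your first approach --- using convexity of $\Omega$ together with concavity of $\varphi$ to exclude a boundary maximum --- is the correct route and is what \cite{AC2} (and hence the paper) actually does. But you have not carried it out: deriving $s_*\ge s_0$ and the normality of $y_0-x_0$ is fine (the latter is legitimate even for viscosity solutions, since varying $x$ along $\partial\Omega$ keeps $u(x,t)=0$ and leaves a smooth function of $x$), yet the sentence ``concavity then propagates these constraints to contradict $M>0$'' is not a proof. You still need to exhibit the comparison that turns subadditivity of $\varphi$ into the inequality $Z_\epsilon(x_0,y_0,t_0)\le 0$. Complete that step rather than falling back on the penalization.
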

\begin{proof}
The proof is exactly as the proof of Theorem 4.2 in \cite{AC2}
except one replaces the usual comparison principle by the comparison principle for viscosity solutions .
\end{proof}

\section{Acknowledgement}
The second author's research is partially supported
by the Priority Academic Program Development of Jiangsu Higher Education Institutions (PAPD).
Both authors would like to thank Professor Lei Ni for his interest.

\bibliographystyle{alpha}

\bibliography{myref}

\end{document}